\documentclass[12pt]{article}

\usepackage{amssymb}
\usepackage{amsfonts}
\usepackage{dsfont}
\usepackage{amsmath}
\usepackage{amsthm}

\textwidth16cm \textheight23cm \topmargin-1cm \oddsidemargin0cm
\evensidemargin0cm \frenchspacing

\newtheorem{theorem}{Theorem}[section]
\newtheorem{lemma}[theorem]{Lemma}
\newtheorem{remark}[theorem]{Remark}

\newtheorem{proposition}[theorem]{Proposition}

\newtheorem{corollary}[theorem]{Corollary}

\newcommand{\bZ}{\mathbb{Z}}
\newcommand{\bN}{\mathbb{N}}
\newcommand{\bR}{\mathbb{R}}
\newcommand{\bC}{\mathbb{C}}
\newcommand{\bE}{\mathbb{E}}

\newcommand{\cF}{\mathcal{F}}
\newcommand{\Cov}{{\rm Cov\,}}

\newcommand{\Prob}{\mathds{P}}

  % Beweisendezeichen

\title{Strictly stationary solutions of multivariate ARMA equations with i.i.d. noise}
% Use \titlerunning{Short Title} for an abbreviated version of
% your contribution title if the original one is too long
\author{Peter J. Brockwell\thanks{Colorado State University, Fort Collins, Colorado and Columbia University, New York. \texttt{pjb2141@columbia.edu}. } \and
Alexander Lindner\thanks{Institut f\"ur Mathematische Stochastik, TU
Braunschweig,  Pockelsstra{\ss}e 14, D-38106 Braunschweig, Germany
\texttt{a.lindner@tu-bs.de}} \and{Bernd Vollenbr\"oker}\thanks{Institut f\"ur Mathematische Stochastik, TU
Braunschweig,  Pockelsstra{\ss}e 14, D-38106 Braunschweig, Germany
\texttt{b.vollenbroeker@tu-bs.de}}}

\begin{document}
\maketitle

\begin{abstract}We obtain necessary and sufficient conditions for
the existence of strictly stationary solutions of multivariate ARMA
equations with independent and identically distributed noise.
For general ARMA$(p,q)$ equations
these conditions are expressed in terms of the characteristic
polynomials of the defining equations and moments of the driving noise
sequence, while for $p=1$ an additional characterization is obtained
in terms of the Jordan canonical decomposition of the autoregressive
matrix, the moving average coefficient matrices and the noise sequence.
No a priori assumptions are made on either the driving noise sequence or the
coefficient matrices.
\end{abstract}

\section{Introduction}
Let $m,d\in \bN=\{1,2\ldots, \}$, $p,q\in \bN_0 = \bN \cup \{0\}$,
$(Z_t)_{t\in \bZ}$ be a $d$-variate noise sequence of random vectors
defined on some probability space $(\Omega,\cF,\mathbb{P})$ and
$\Psi_1,\ldots, \Psi_p \in \mathbb{C}^{m\times m}$ and
$\Theta_0,\ldots, \Theta_q \in \mathbb{C}^{m\times d}$ be
deterministic complex-valued matrices. Then any $m$-variate
stochastic process $(Y_t)_{t\in \bZ}$ defined on the same
probability space $(\Omega,\cF,\mathbb{P})$ which satisfies almost
surely
 \begin{equation}\label{eqpq} Y_t-\Psi_1 Y_{t-1}-\ldots-\Psi_p Y_{t-p}=\Theta_0Z_t+\ldots+\Theta_qZ_{t-q},\quad
 t\in\mathds{Z},\end{equation}
is called a solution of the ARMA$(p,q)$ equation \eqref{eqpq}
(autoregressive moving average equation of autoregressive order $p$
and moving average order $q$). Such a solution is often called a VARMA (vector ARMA) process to distinguish it from the scalar case, but we shall simply use the term ARMA throughout.
Denoting the identity matrix in $\bC^{m\times m}$ by ${\rm Id}_m$, the {\it characteristic
polynomials} $P(z)$ and $Q(z)$ of the ARMA$(p,q)$ equation
\eqref{eqpq} are defined as
\begin{equation} \label{eq-char}
P(z) := \mbox{\rm Id}_m - \sum_{k=1}^p  \Psi_k z^k \quad \mbox{and}
\quad Q(z) := \sum_{k=0}^q \Theta_k z^k\quad \mbox{for}\quad z\in
\bC.
\end{equation}
With the aid of the backwards shift operator $B$, equation
\eqref{eqpq} can be written  more compactly in the form
\begin{equation*} \label{ARMA}
P(B) Y_t = Q(B) Z_t, \quad t\in \bZ.
\end{equation*}

There is evidence to show that, although VARMA($p,q$) models with $q>0$ are more difficult to estimate
than VARMA$(p,0)$ (vector autoregressive) models, significant improvement in forecasting performance can be achieved by allowing the moving average order $q$ to be greater than zero.  See, for example, Athanosopoulos and Vahid~\cite{AV}, where such improvement is demonstrated for a variety of macroeconomic time series.

Much attention has been paid to {\it weak ARMA processes}, i.e.
weakly stationary solutions to \eqref{eqpq} if $(Z_t)_{t\in \bZ}$ is
a weak white noise sequence. Recall that a $\bC^r$-valued process
$(X_t)_{t\in \bZ}$ is {\it weakly stationary} if each $X_t$ has
finite second moment, and if $\mathbb{E} X_t$ and $\Cov (X_t,
X_{t+h})$ do not depend on $t\in \bZ$ for each $h\in \bZ$. If
additionally every component of $X_t$ is uncorrelated with every
component of $X_{t'}$ for $t\neq t'$, then $(X_t)_{t\in \bZ}$ is
called {\it weak white noise}. In the case when $m=d=1$ and $Z_t$ is
weak white noise having non-zero variance, it can easily be shown using
spectral analysis, see e.g. Brockwell and Davis~\cite{BD},
Problem~4.28, that a weak ARMA process exists if and only if the
rational function $z\mapsto Q(z) / P(z)$ has only removable
singularities on the unit circle in $\bC$. For higher dimensions, it
is well known that a sufficient condition for weak ARMA processes to
exist is that the polynomial $z \mapsto \det P(z)$ has no zeroes on
the unit circle (this follows as in Theorem~11.3.1 of Brockwell and
Davis~\cite{BD}, by developing $P^{-1}(z) = (\det P(z))^{-1}
\mbox{Adj} (P(z))$, where $\mbox{Adj}( P(z))$ denotes the adjugate
matrix of $P(z)$, into a Laurent series which is convergent in a
neighborhood of the unit circle). However,  to the best of our
knowledge necessary and sufficient conditions have not been given in
the literature so far. We shall obtain such a condition in terms of
the matrix rational function $z\mapsto P^{-1}(z) Q(z)$ in
Theorem~\ref{thm-4}, the proof being an easy extension of the
corresponding one-dimensional result.

Weak ARMA processes, by definition, are restricted to have finite second moments.
However financial time series often exhibit apparent heavy-tailed behaviour with
asymmetric marginal distributions, so that second-order properties are inadequate to
account for the data. To deal with such phenomena we focus in this paper on {\it strict ARMA
processes}, by which we mean strictly stationary solutions of
\eqref{eqpq} when $(Z_t)_{t\in \bZ}$ is supposed to be an
independent and identically distributed (i.i.d.) sequence of random
vectors, not necessarily with finite variance. A sequence
$(X_t)_{t\in \bZ}$ is {\it strictly stationary} if all its finite
dimensional distributions are shift invariant. Much less is known
about strict ARMA processes, and it was shown only recently for
$m=d=1$ in Brockwell and Lindner \cite{BL2} that for i.i.d.
non-deterministic noise $(Z_t)_{t\in \bZ}$, a strictly stationary
solution to \eqref{eqpq} exists if and only if $Q(z)/P(z)$ has only
removable singularities on the unit circle and $Z_0$ has finite log
moment, or if $Q(z)/P(z)$ is a polynomial. For higher dimensions,
while it is known that finite log moment of $Z_0$ together with
$\det P(z) \neq 0$ for $|z|=1$ is {\it sufficient} for a strictly
stationary solution to exist, by the same arguments used for
weakly stationary solutions, necessary and sufficient conditions
have not been available so far, and we shall obtain a complete
solution to this question in Theorem~\ref{thm-5}, thus generalizing
the results of \cite{BL2} to higher dimensions. A related question
was considered by Bougerol and Picard~\cite{BP} who, using their powerful
results on random recurrence
equations, showed in Theorem~4.1 of \cite{BP} that if
$\mathbb{E} \log^+\|Z_0\| < \infty$ and the characteristic
polynomials are left-coprime, meaning that the only common left-divisors
of $P(z)$ and $Q(z)$ are unimodular (see Section~\ref{S7}
for the precise definitions), then a non-anticipative strictly
stationary solution to \eqref{eqpq} exists if and only if $\det P(z)
\neq 0$ for $|z|\leq 1$. Observe that for the characterization of
the existence of strict (not necessarily non-anticipative) ARMA
processes obtained in the present paper, we shall not make any a
priori assumptions on log moments of the noise sequence or on
left-coprimeness of the characteristic polynomials, but rather obtain
related conditions as parts of our characterization. As an
application of our main results, we shall then obtain a slight
extension of Theorem~4.1 of Bougerol and Picard~\cite{BP} in
Theorem~\ref{cor-BP}, by characterizing all non-anticipative
strictly stationary solutions to \eqref{eqpq} without any moment
assumptions, however still assuming left-coprimeness of the
characteristic polynomials.

The paper is organized as follows. In Section~\ref{S2} we state the
main results of the paper. Theorem~\ref{thm-main} gives necessary
and sufficient conditions for the
multivariate ARMA$(1,q)$ model
\begin{equation} \label{eq1q}
Y_t -
\Psi_1 Y_{t-1} = \sum_{k=0}^q \Theta_k Z_{t-k}, \quad t\in \bZ,
\end{equation}
where $(Z_t)_{t\in \bZ}$ is an i.i.d. sequence, to have a strictly
stationary solution. Elementary considerations will show that the
question of strictly stationary solutions may be reduced to the
corresponding question when $\Psi_1$ is assumed to be in Jordan
block form, and Theorem~\ref{thm-main} gives a characterization of
the existence of strictly stationary ARMA$(1,q)$ processes in terms
of the Jordan canonical decomposition of $\Psi_1$ and properties of
$Z_0$ and the coefficients $\Theta_k$.
An explicit solution of \eqref{eq1q},
assuming its existence, is also derived and the question of uniqueness
of this solution is addressed.

Strict ARMA$(p,q)$ processes are addressed in
Theorem~\ref{thm-5}. Since every $m$-variate ARMA$(p,q)$ process can
be expressed in terms of a corresponding $mp$-variate ARMA$(1,q)$ process,
questions of existence and uniqueness can, in principle, be resolved by
Theorem~\ref{thm-main}. However,
since the Jordan canonical form of the corresponding $mp\times mp$-matrix
$\underline{\Psi}_1$ in the corresponding higher-dimensional
ARMA$(1,q)$ representation is in general difficult to handle,
another more compact characterization is derived in
Theorem~\ref{thm-5}. This characterization is  given in terms of
properties of the matrix rational function $P^{-1}(z) Q(z)$ and
finite log moments of certain linear combinations of the components
of $Z_0$, extending the corresponding condition obtained in
\cite{BL2} for $m=d=1$ in a natural way. Although in the statement
of Theorem~\ref{thm-5} no transformation to Jordan canonical forms
is needed, its proof makes fundamental use of
Theorem~\ref{thm-main}.

Theorem~\ref{thm-4} deals with the corresponding question for weak
ARMA$(p,q)$ processes. The proofs of Theorems~\ref{thm-main},
\ref{thm-4} and \ref{thm-5} are given in Sections~\ref{S5}, \ref{S4}
and \ref{S6}, respectively.  The proof of Theorem~\ref{thm-5} makes
crucial use of Theorems~\ref{thm-main} and~\ref{thm-4}.

The main results  are further discussed in Section~\ref{S7} and, as
an application, the aforementioned characterization of
non-anticipative strictly stationary solutions is obtained in
Theorem~\ref{cor-BP}, generalizing slightly the result of Bougerol
and Picard~\cite{BP}.

Throughout the paper, vectors will be understood as column vectors
and $e_i$ will denote the $i^{th}$ unit vector in $\bC^m$. The zero
matrix in $\bC^{m\times r}$ is denoted by $0_{m,r}$ or simply $0$,
the zero vector in $\bC^r$ by $0_r$ or simply $0$. The transpose of
a matrix $A$ is denoted by $A^T$, and its complex conjugate
transpose matrix by $A^* = \overline{A}^T$. By
 $\| \cdot \|$ we denote an unspecific, but fixed vector
 norm on $\bC^s$ for $s\in \bN$, as well as
the corresponding matrix norm $\|A\| = \sup_{x\in \bC^s, \|x\|=1}
\|Ax\|$. We write $\log^+ (x) := \log \max \{1,x\}$ for $x\in \bR$,
and denote by $\mathbb{P}-\lim$ limits in probability.

\section{Main results} \label{S2}
\setcounter{equation}{0}

Theorems~\ref{thm-main} and~\ref{thm-5} give necessary and
sufficient conditions for the ARMA$(1,q)$ equation \eqref{eq1q} and
the ARMA$(p,q)$ equation \eqref{eqpq}, respectively,  to have a strictly
stationary solution. In Theorem~\ref{thm-main}, these conditions are
expressed in terms of the i.i.d. noise sequence $(Z_t)_{t\in
\mathbb{Z}}$, the coefficient matrices $\Theta_0, \ldots, \Theta_q$
and the Jordan canonical decomposition of $\Psi_1$, while in
Theorem~\ref{thm-5} they are given in terms of the noise sequence
and the characteristic polynomials $P(z)$ and $Q(z)$ as defined in
\eqref{eq-char}.

As background for Theorem~\ref{thm-main}, suppose that $\Psi_1\in
\bC^{m\times m}$ and choose a (necessarily non-singular) matrix
$S\in \bC^{m\times m}$ such that
 $S^{-1}\Psi_1 S$ is in Jordan canonical form.
   Suppose also that $S^{-1}\Psi_1 S$ has $H\in \bN$ Jordan blocks,
   $\Phi_1,\ldots,\Phi_H$, the $h^{th}$ block beginning in row $r_h$, where
 $r_1:=1<r_2<\cdots<r_{H}< m+1=:r_{H+1}.$ A Jordan block with
 associated eigenvalue $\lambda$ will always be understood to be of the form
 \begin{equation}\begin{pmatrix} \lambda &  & & 0 \\
 1 &  \lambda  & & \\
 &  \ddots & \ddots & \\
 0 &  &  1 & \lambda
 \end{pmatrix} \label{eq-Jordanblock2} \end{equation}
i.e. the entries 1 are below the main diagonal.

Observe that (\ref{eq1q}) has a strictly stationary solution
$(Y_t)_{t\in \bZ}$
 if and only if the corresponding equation for
 $X_t:=S^{-1}Y_t$ namely
 \begin{equation}\label{diag1q}X_t-S^{-1}\Psi_1 SX_{t-1}=\sum_{j=0}^q S^{-1}\Theta_jZ_{t-j},\quad t\in \bZ,\end{equation}
 has a strictly stationary solution. This will be the case only if the equation for the $h^{th}$ block,
 \begin{equation} X_t^{(h)}:=I_h X_t, \quad t\in \bZ, \label{eq-Xt}
 \end{equation}
 where $I_h$ is the $(r_{h+1}-r_h)\times m$ matrix with $(i,j)$ components,
 \begin{equation} \label{def-Is}
 I_h(i,j)=\begin{cases}1,&{\rm if}~j=i+r_h-1,\cr
                         0,&{\rm otherwise},\cr
            \end{cases}\end{equation}
 has a strictly stationary solution for each $h=1,\ldots,H.$   But these equations are simply
 \begin{equation}\label{block1q}X_t^{(h)}-\Phi_h X_{t-1}^{(h)}=\sum_{j=0}^q I_h S^{-1}\Theta_jZ_{n-j}, \quad t\in \bZ, \quad
 h=1,\ldots,H,\end{equation}
 where $\Phi_h$ is the $h^{th}$ Jordan block of $S^{-1}\Psi_1 S$.

 Conversely if (\ref{block1q}) has a strictly stationary solution ${X'}^{(h)}$ for each
 $h\in\{1,\ldots,H\}$,
 then we shall see from the proof of Theorem~\ref{thm-main} that there exist (possibly different if $|\lambda_h|=1$) strictly stationary solutions
 $X^{(h)}$ of \eqref{block1q} for each $h\in \{1,\ldots, H\}$, such that
 \begin{equation}Y_t:=S(X_t^{(1)T}, \ldots, X_t^{(H)T})^T, \quad t\in \bZ, \label{eq-solution}\end{equation}
 is a strictly stationary solution of (\ref{eq1q}).

 Existence and uniqueness of a strictly stationary solution of (\ref{eq1q}) is therefore equivalent to the
  existence and uniqueness of a strictly stationary solution of the equations (\ref{block1q}) for
   each $h\in\{1,\ldots,H\}$. The necessary and sufficient condition for each one will depend on the value of the eigenvalue $\lambda_h$
   associated with $\Phi_h$ and in particular on whether (a) $|\lambda_h|\in(0,1)$,
 (b) $|\lambda_h|>1$, (c) $|\lambda_h|=1$ and $\lambda_h\ne 1$, (d) $\lambda_h=1$ and (e) $\lambda_h=0$.  These cases will be addressed separately in the proof of
 Theorem~\ref{thm-main}, which is given in Section~\ref{S5}. The
 aforementioned characterization in terms of the Jordan decomposition of $\Psi_1$ now reads as follows.

\begin{theorem} {\rm [Strict ARMA$(1,q)$ processes]}
\label{thm-main} \\ Let $m,d\in \bN$, $q\in \bN_0$, and let
$(Z_t)_{t\in \bZ}$ be an i.i.d. sequence of $\bC^d$-valued random
vectors. Let $\Psi_1 \in \bC^{m\times m}$ and $\Theta_0, \ldots,
\Theta_q \in \bC^{m\times d}$ be complex-valued matrices. Let $S\in
\bC^{m\times m}$ be an invertible matrix such that $S^{-1} \Psi_1 S$
is in Jordan block form as above, with $H$ Jordan blocks $\Phi_h$,
$h\in \{1,\ldots, H\}$, and associated eigenvalues $\lambda_h$,
$h\in \{1,\ldots, H\}$. Let $r_1, \ldots, r_{H+1}$ be given as above
and $I_h$ as defined by \eqref{def-Is}. Then the ARMA$(1,q)$
equation \eqref{eq1q} has a strictly stationary solution $Y$ if and
only if the following statements (i) -- (iii) hold:
\begin{enumerate}
\addtolength{\itemsep}{-1.5ex}
\item[(i)] For every $h\in \{1,\ldots, H\}$ such that $|\lambda_h|
\neq 0,1$,
\begin{equation}
\mathbb{E} \log^+ \left\| \left( \sum_{k=0}^q \Phi_{h}^{q-k}
I_{h}S^{-1}\Theta_k\right)Z_0 \right\|  <  \infty.\label{bed1a}
\end{equation}
\item[(ii)] For every $h\in \{1,\ldots, H\}$ such that
$|\lambda_h|=1$, but $\lambda_h \neq 1$, there exists a constant
$\alpha_h \in \bC^{r_{h+1}-r_h}$ such that
\begin{equation} \label{bed2a}
\left(\sum_{k=0}^q \Phi_{h}^{q-k} I_{h} S^{-1} \Theta_k\right)Z_0 =
 {\alpha}_h \; \; \mbox{\rm a.s.}
\end{equation}
\item[(iii)] For every $h\in \{1,\ldots, H\}$ such that
$\lambda_h=1$, there exists a constant $\alpha_h = (\alpha_{h,1},
\ldots,$ $\alpha_{h,r_{h+1}-r_h})^T  \in \bC^{r_{h+1}-r_h}$ such
that $\alpha_{h,1} = 0$ and \eqref{bed2a} holds.
\end{enumerate}
 If these conditions are
satisfied, then a strictly stationary solution to \eqref{eq1q} is
given by \eqref{eq-solution} with
\begin{equation} \label{eq-solution1}
X_t^{(h)} := \begin{cases} \sum_{j=0}^{\infty} \Phi_{h}^{j-q} \left(
\sum_{k=0}^{j\wedge q} \Phi_{h}^{q-k} I_{h} S^{-1}
\Theta_k\right) Z_{t-j} , & |\lambda_h| \in (0,1), \\
 -\sum_{j=1-q}^{\infty} \Phi_{h}^{-j-q} \left( \sum_{k=(1-j)\vee
0}^{q} \Phi_{h}^{q-k} I_{h} S^{-1}
\Theta_k\right) Z_{t+j} , & |\lambda_h| > 1\\
\sum_{j=0}^{m+q-1} \left( \sum_{k=0}^{j\wedge q} \Phi_h^{j-k} I_h
S^{-1}\Theta_k \right)Z_{t-j}, & \lambda_h = 0,\\
f_h + \sum_{j=0}^{q-1} \left
(\sum_{k=0}^j\Phi_{h}^{j-k}I_{h}S^{-1}\Theta_k\right) Z_{t-j}, &
|\lambda_h| =1,\end{cases} \end{equation} where $f_h \in
\bC^{r_{h+1}-r_h}$ is a solution to
\begin{equation} \label{eq-solution6}
(\mbox{\rm Id}_{h} - \Phi_h)f_h = \alpha_h,
\end{equation}
which exists for $\lambda_h=1$ by (iii) and, for $|\lambda|=1,\lambda\ne 1$, by the invertibility of
$(\mbox{\rm Id}_{h} - \Phi_h)$.
 The series in \eqref{eq-solution1} converge a.s. absolutely.\\
If the necessary and sufficient conditions stated above are
satisfied, then, provided the underlying probability space is rich
enough to support a random variable which is uniformly distributed
on $[0,1)$ and independent of $(Z_t)_{t\in \bZ}$, the solution given
by \eqref{eq-solution} and \eqref{eq-solution1} is the unique
strictly stationary solution of \eqref{eq1q} if and only if
$|\lambda_h| \neq 1$ for all $h\in \{1,\ldots,H\}$.
 \end{theorem}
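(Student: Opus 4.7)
The plan is to treat uniqueness block-by-block, using the Jordan decomposition already set up in the preceding discussion. For the ``if'' direction (all $|\lambda_h|\neq 1$ implies uniqueness), suppose $Y$ and $Y'$ are two strictly stationary solutions of \eqref{eq1q}. Then $W_t := S^{-1}(Y_t - Y_t')$ is strictly stationary, and its block-projections $W_t^{(h)} := I_h W_t$ satisfy the homogeneous recursion $W_t^{(h)} = \Phi_h W_{t-1}^{(h)}$, so that $W_t^{(h)} = \Phi_h^n W_{t-n}^{(h)}$ for every $n \in \bN$. When $|\lambda_h|\in (0,1)$, $\|\Phi_h^n\|\to 0$; combined with tightness of $\{W_{t-n}^{(h)}\}_n$ (immediate from strict stationarity), this forces $\Phi_h^n W_{t-n}^{(h)} \to 0$ in probability, hence $W_t^{(h)}=0$ a.s. When $|\lambda_h|>1$, the analogous argument run forward in time via $W_t^{(h)} = \Phi_h^{-n} W_{t+n}^{(h)}$ with $\|\Phi_h^{-n}\|\to 0$ yields the same conclusion. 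When $\lambda_h=0$, $\Phi_h$ is nilpotent with $\Phi_h^{r_{h+1}-r_h}=0$, giving $W_t^{(h)}=0$ directly. Reassembling the blocks yields $Y = Y'$ a.s.

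For the ``only if'' direction, suppose $|\lambda_{h_0}|=1$ for some $h_0$ and write $k_0 := r_{h_0+1}-r_{h_0}$. Since $\Phi_{h_0}$ has the form \eqref{eq-Jordanblock2} with $1$'s below the diagonal, the last standard basis vector $e_{k_0} \in \bC^{k_0}$ satisfies $\Phi_{h_0} e_{k_0} = \lambda_{h_0} e_{k_0}$. Let $U$ be a uniform $[0,1)$ random variable independent of $(Z_t)_{t\in \bZ}$, as provided by the richness hypothesis, and set
\[ \wt{X}_t^{(h_0)} := X_t^{(h_0)} + e^{2\pi i U} \lambda_{h_0}^t e_{k_0}, \qquad \wt{X}_t^{(h)} := X_t^{(h)} \quad (h\neq h_0), \]
with $\wt{Y}_t := S(\wt{X}_t^{(1)T},\ldots,\wt{X}_t^{(H)T})^T$. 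The added term satisfies the homogeneous block equation, so $\wt{Y}$ again solves \eqref{eq1q}, and $\wt{Y} \neq Y$ a.s.\ since $|e^{2\pi i U}|\equiv 1$.

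To verify strict stationarity of $\wt{Y}$, let $v \in \bC^m$ denote the image under $S$ of the vector whose $h_0$-th block equals $e_{k_0}$ and whose other entries are zero, so that $\wt{Y}_t = Y_t + V_t v$ with $V_t := e^{2\pi i U} \lambda_{h_0}^t$. For indices $t_1,\ldots,t_n$ and any shift $s$, one has $(\wt{Y}_{t_j+s})_j = (Y_{t_j+s})_j + V_s (\lambda_{h_0}^{t_j} v)_j$. Since each $Y_t$ is measurable with respect to $(Z_t)_t$ and $U$ is independent of $(Z_t)_t$, the variable $V_s$ is independent of $(Y_{t_j+s})_j$; since $|\lambda_{h_0}|=1$, the uniform law on the unit circle is invariant under multiplication by $\lambda_{h_0}^s$, so $V_s \stackrel{d}{=} V_0$; and by strict stationarity of $Y$, $(Y_{t_j+s})_j \stackrel{d}{=} (Y_{t_j})_j$. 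Equating product measures, the joint distribution of $(\wt{Y}_{t_j+s})_j$ is independent of $s$.

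The main technical point will be this last stationarity verification, which requires handling the shift simultaneously in the noise-driven component $Y_{t+s}$ and the rotational term $V_{t+s}$; it is resolved by exploiting the independence of $U$ from the noise together with the circle-invariance of the uniform law on the unit circle. An additional minor subtlety is that the uniqueness argument above uses only tightness of marginals under the vanishing operator norm $\|\Phi_h^n\|$, so the hypothesis $|\lambda_h|\in(0,1)$ or $|\lambda_h|>1$ (rather than spectral radius strictly less than $1$, etc.) is all that is required on the block level.
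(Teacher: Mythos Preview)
Your proposal addresses only the uniqueness assertion in the final paragraph of the theorem. The necessity of conditions (i)--(iii) (the paper's Section~3.1, which for $|\lambda_h|\in(0,1)\cup(1,\infty)$ extracts the log-moment condition from almost-sure convergence of the iterated recursion, and for $|\lambda_h|=1$ runs a coordinate-by-coordinate induction on the Jordan block combined with a symmetrisation trick) and the sufficiency with the explicit solution formula (Section~3.2) are not touched. If the task was the full theorem, this is a substantial gap.

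Restricting attention to the uniqueness claim, your argument is essentially the paper's. For the ``if'' direction the paper iterates \eqref{it1} and invokes \eqref{eq-uniqueness1} to show that any stationary solution must equal the explicit formula; you instead subtract two solutions and kill the homogeneous difference via $\|\Phi_h^{\pm n}\|\to 0$ against tightness. These are equivalent manoeuvres. One wording issue: you assert that $W_t = S^{-1}(Y_t - Y_t')$ is strictly stationary, but two individually stationary processes on the same probability space need not be \emph{jointly} stationary, so their difference need not be stationary either. What you actually use---and what does hold---is tightness of $\{W_{t-n}^{(h)}\}_n$, which follows from $\|W_s\|\le \|S^{-1}\|(\|Y_s\|+\|Y_s'\|)$ and the fact that each summand has a marginal law independent of $s$ by stationarity of $Y$ and $Y'$ separately. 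For the ``only if'' direction your construction coincides exactly with the paper's $R_t = \lambda_{h_0}^t e^{2\pi i U} e_{k_0}$; your explicit verification of strict stationarity via rotation-invariance of the uniform law on the unit circle is a useful elaboration of what the paper asserts without detail.
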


Special cases of Theorem~\ref{thm-main} will be treated in Corollaries
\ref{cor-1}, \ref{cor-2} and Remark \ref{rem-2}.

It is well known that every ARMA$(p,q)$ process can be embedded into
a higher dimensional ARMA$(1,q)$ process as specified in
Proposition~\ref{thm-2} of Section~\ref{S6}. Hence, in principle,
the questions of existence and uniqueness of strictly stationary ARMA$(p,q)$ processes can be
reduced to Theorem~\ref{thm-main}. However, it is generally
difficult to obtain the Jordan canonical decomposition of the
$(mp\times mp)$-dimensional matrix $\underline{\Phi}$ defined in
Proposition~\ref{thm-2}, which is needed to apply
Theorem~\ref{thm-main}. Hence, a more natural approach is to express
the conditions in terms of the characteristic polynomials $P(z)$ and
$Q(z)$ of the ARMA$(p,q)$ equation \eqref{eqpq}.  Observe that $z\mapsto \det P(z)$ is a polynomial in $z\in \bC$, not
identical to the zero polynomial. Hence $P(z)$ is invertible except
for a finite number of $z$. Also, denoting the adjugate matrix of
$P(z)$ by $\mbox{Adj} (P(z))$, it follows from Cram\'er's inversion
rule that the inverse $P^{-1}(z)$ of $P(z)$ may be written as
 $$P^{-1}(z) = (\det P(z))^{-1} \mbox{Adj} (P(z))$$ which is a
$\bC^{m\times m}$-valued rational function, i.e. all its entries are
rational functions. For a general matrix-valued rational function
$z\mapsto M(z)$ of the form $M(z) = P^{-1}(z) \widetilde{Q}(z)$ with
some matrix polynomial $\widetilde{Q}(z)$, the {\it singularities}
of $M(z)$ are the zeroes of $\det P(z)$, and such a singularity,
$z_0$ say, is {\it removable} if all entries of $M(z)$ have
removable singularities at $z_0$. Further observe that if $M(z)$ has
only removable singularities on the unit circle in $\bC$, then
$M(z)$ can be expanded in a Laurent series $M(z) =
\sum_{j=-\infty}^\infty M_j z^j$, convergent in a neighborhood of
the unit circle. The characterization for the existence of strictly
stationary ARMA$(p,q)$ processes now reads as follows.

\begin{theorem} {\rm [Strict ARMA$(p,q)$ processes]} \label{thm-5}
\\
Let $m,d, p\in \bN$, $q\in \bN_0$, and let $(Z_t)_{t\in \bZ}$ be an
i.i.d. sequence of $\bC^d$-valued random vectors. Let $\Psi_1,
\ldots, \Psi_p \in \bC^{m\times m}$ and $\Theta_0, \ldots, \Theta_q
\in \bC^{m\times d}$ be complex-valued matrices, and define the
characteristic polynomials as in \eqref{eq-char}. Define the linear
subspace
$$K := \{ a \in \bC^d : \mbox{the distribution of}\; a^* Z_0 \; \mbox{is degenerate to a Dirac
measure}\}$$ of $\bC^d$, denote by $K^\perp$ its orthogonal
complement in $\bC^d$, and let $s:= \dim K^\perp$ the vector space
dimension of $K^\perp$. Let $U\in \bC^{d\times d}$ be  unitary such
that $U \, K^\perp = \bC^s \times \{0_{d-s}\}$ and $U\, K =
\{0_s\}\times \bC^{d-s}$, and define the $\bC^{m\times d}$-valued
rational function $M(z)$ by
\begin{equation}
 z \mapsto M(z)  :=  P^{-1}(z)  Q(z) U^* \left(
\begin{array}{ll} \mbox{\rm Id}_s & 0_{s,d-s}
\\ 0_{d-s,s} & 0_{d-s,d-s} \end{array} \right) .
 \label{meromorphic}
\end{equation}
Then there is a constant $u\in \bC^{d-s}$ and a $\bC^{s}$-valued
i.i.d. sequence $(w_t)_{t\in \bZ}$ such that
\begin{equation} \label{eq-uw}
U Z_t = \left( \begin{array}{c} w_t \\ u \end{array} \right)\quad
\mbox{a.s.} \quad \forall\; t\in \bZ, \end{equation} and the
distribution of $b^* w_0$ is not degenerate to a Dirac measure for
any $b\in \bC^s\setminus \{0\}$. Further, a strictly stationary
solution to the ARMA$(p,q)$ equation~\eqref{eqpq} exists if and only
if the following statements (i)---(iii) hold:
\begin{enumerate}
\addtolength{\itemsep}{-1.5ex}
\item[(i)]
All singularities on the unit circle of the meromorphic function
$M(z)$ are removable.
\item[(ii)] If $M(z) = \sum_{j=-\infty}^\infty M_j z^j$ denotes the
Laurent expansion of $M$ in a neighbourhood of the unit circle, then
\begin{equation} \label{eq-logfinite}
\bE \log^+ \| M_j UZ_0\| < \infty \quad \forall\; j \in \{ mp + q -
p +1 , \ldots, mp+q\} \cup \{-p,\ldots, -1\}.
\end{equation}
\item[(iii)] There exist $v\in \bC^s$ and $g\in \bC^m$ such that
 $g$ is a solution to the
linear equation
 \begin{equation} \label{eq-g}
P(1) g = Q(1) U^* (v^T, u^T)^T.
\end{equation}
\end{enumerate}
Further, if (i) above holds, then condition (ii) can be replaced by
\begin{enumerate}
\item[(ii')] If $M(z) = \sum_{j=-\infty}^\infty M_j z^j$ denotes the
Laurent expansion of $M$ in a neighbourhood of the unit circle, then
$\sum_{j=-\infty}^\infty M_j U Z_{t-j}$ converges almost surely
absolutely for every $t\in \bZ$,
\end{enumerate}
and condition (iii) can be replaced by
\begin{enumerate}
\item[(iii')] For all $v\in \bC^s$ there exists a solution $g=g(v)$ to
the linear equation \eqref{eq-g}.
\end{enumerate}
If the conditions (i)--(iii) given above are satisfied, then a
strictly stationary solution $Y$ of the ARMA$(p,q)$ equation
\eqref{eqpq} is given by
\begin{equation} \label{eq-Y}
Y_t = g + \sum_{j=-\infty}^\infty  M_j (UZ_{t-j} - (v^T, u^T)^T),
\quad t\in \bZ,\end{equation} the series converging almost surely
absolutely. Further, provided that the underlying probability space
is rich enough to support a random variable which is uniformly
distributed on $[0,1)$ and independent of $(Z_t)_{t\in \bZ}$, the
solution given by \eqref{eq-Y} is the unique strictly stationary
solution of \eqref{eqpq} if and only if $\det P(z) \neq 0 $ for all
$z$ on the unit circle.
\end{theorem}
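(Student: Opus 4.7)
The plan is to reduce the ARMA$(p,q)$ problem to the ARMA$(1,q)$ case via the standard state-space embedding of Proposition~\ref{thm-2} and then combine Theorems~\ref{thm-main} and~\ref{thm-4}. First I would establish the noise decomposition: $K$ is a $\bC$-linear subspace of $\bC^d$ by direct check, so with $U$ chosen as in the statement and $U Z_t = (w_t^T, c_t^T)^T \in \bC^s \times \bC^{d-s}$, each entry of $c_t$ is a.s.\ constant (by definition of $K$), and by the i.i.d.\ assumption on $(Z_t)$ the same vector $u$ serves for every $t\in\bZ$; non-degeneracy of $b^* w_0$ for $b\in\bC^s\setminus\{0\}$ follows from the definition of $K^\perp$, establishing \eqref{eq-uw}. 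Since $Q(B) Z_t = Q(B) U^*(w_t^T, 0_{d-s}^T)^T + Q(1) U^*(0_s^T, u^T)^T$ and the second summand is time-independent, I would look for a solution of the form $Y_t = g + \widetilde{Y}_t$, where $(g,v)\in\bC^m\times\bC^s$ solves the algebraic equation \eqref{eq-g} (condition (iii)) and $\widetilde{Y}_t$ is a strictly stationary solution of $P(B)\widetilde{Y}_t = Q(B)U^*((w_t-v)^T, 0_{d-s}^T)^T$, an ARMA equation driven by a fully non-degenerate $\bC^s$-valued noise.

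For the stochastic part I would embed into the $mp$-dimensional ARMA$(1,q)$ form, whose autoregressive matrix has eigenvalues essentially the reciprocals of the roots of $\det P(z)$ together with a possible contribution of zero eigenvalues from the companion-type structure. Applying Theorem~\ref{thm-main} block by block, identifying the per-block coefficients with Laurent coefficients of $M(z)$---Jordan blocks with $|\lambda_h|<1$ produce the positive-index part of the expansion, $|\lambda_h|>1$ the negative-index part, $\lambda_h=0$ a polynomial of degree at most $m+q-1$, and $|\lambda_h|=1$ blocks force a removable singularity plus a constant correction---I would read off conditions (i) and (ii). Theorem~\ref{thm-4}, applied with the non-degenerate noise $w_t$, is what translates the block-wise degeneracy conditions \eqref{bed2a} for $|\lambda_h|=1$ into removability of singularities of $M(z)$ on the unit circle, i.e.\ condition (i); the constant correction for $\lambda_h=1$ blocks is absorbed into $g$ through (iii).

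The main obstacle is the combinatorial bookkeeping needed to recast the per-block log-moment hypotheses \eqref{bed1a} as the single statement (ii) over the finite index set $\{mp+q-p+1,\ldots,mp+q\}\cup\{-p,\ldots,-1\}$: each Jordan block of the embedding has size at most $mp$ and the Laurent tail coefficients along directions with $|\lambda_h|\neq 1$ decay geometrically, so finiteness of $\bE\log^+\|M_j UZ_0\|$ at these finitely many extremal indices propagates to all $j$, while conversely these extremal $M_j$ carry the block-leading coefficients $\Phi_h^{q-k} I_h S^{-1}\Theta_k$ and therefore force \eqref{bed1a}. The same geometric decay gives the equivalence of (ii) and (ii') once (i) holds; and the equivalence of (iii) and (iii') under (i) is pure linear algebra, since removability of $M(z)$ at $z=1$ forces the image $Q(1) U^*(\bC^s\times\{0_{d-s}\})$ to lie in the range of $P(1)$, so that solvability of \eqref{eq-g} for some $v$ is equivalent to solvability for every $v$. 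The explicit solution \eqref{eq-Y} is then obtained by summing the block formulas \eqref{eq-solution1} under the inverse embedding and adding the constant $g$; uniqueness follows from Theorem~\ref{thm-main}, which gives uniqueness in the ARMA$(1,q)$ embedding iff no eigenvalue of the embedded autoregressive matrix lies on the unit circle, i.e.\ iff $\det P(z)\neq 0$ for $|z|=1$.
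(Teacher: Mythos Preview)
Your overall strategy---embed via Proposition~\ref{thm-2}, apply Theorem~\ref{thm-main} blockwise, and use Theorem~\ref{thm-4} to pass between the Jordan-block degeneracy conditions and removability of $M(z)$---is exactly the paper's. But there is a genuine gap in how you invoke Theorem~\ref{thm-4}. That theorem concerns \emph{weak} white noise and hence requires finite second moments; you propose to apply it ``with the non-degenerate noise $w_t$'', but nothing in the hypotheses guarantees $\bE\|w_0\|^2<\infty$. The paper does not apply Theorem~\ref{thm-4} to the original noise. Instead, once Theorem~\ref{thm-main} yields $A_h=0$ for the unit-circle blocks (in your notation: the block coefficients in the $w$-direction vanish), the paper observes that an \emph{auxiliary} i.i.d.\ Gaussian sequence with covariance $\left(\begin{smallmatrix}\mathrm{Id}_s&0\\0&0\end{smallmatrix}\right)$ then satisfies all the block hypotheses of Theorem~\ref{thm-main}, hence admits a strictly stationary---and, being Gaussian, weakly stationary---solution. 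Only now can Theorem~\ref{thm-4} be applied, yielding removability of $M(z)$ and solvability of \eqref{eq-g}. The same Gaussian device is used in the sufficiency direction and in the key identification of the Laurent coefficients $M_j$ with the block matrices $\underline{N}_j$ (the paper's Lemma~\ref{lem-3}), which you treat as routine bookkeeping but which in fact requires constructing a concrete $L^2$ solution so that one can multiply by $Z_{t-j}'^{\,T}$ and take expectations to match coefficients.

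Your linear-algebra argument for (iii)$\Leftrightarrow$(iii') under (i)---that removability at $z=1$ gives $P(1)M(1)=\widetilde{Q}(1)$ and hence $\widetilde{Q}(1)(\bC^d)\subset\mathrm{range}\,P(1)$---is correct and slightly more direct than what the paper does; the paper instead obtains (iii') as part of the necessity proof by letting the mean $v$ of the auxiliary Gaussian vary. Either way works. The essential missing ingredient in your plan is the auxiliary Gaussian construction; without it you cannot legitimately invoke Theorem~\ref{thm-4}, and the identification of $M_j$ with the explicit block formulas \eqref{eq-solution1} (which underlies both your derivation of (ii) and your formula \eqref{eq-Y}) is left unjustified.
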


Special cases of Theorem~\ref{thm-5} are treated in
Remarks \ref{rem-2a}, \ref{rem-2c} and Corollary \ref{cor-3}. Observe that for $m=1$,
Theorem~\ref{thm-5} reduces to the corresponding result in Brockwell
and Lindner~\cite{BL2}. Also observe that condition (iii) of
Theorem~\ref{thm-5} is not implied by condition (i), which can be
seen e.g. by allowing a deterministic noise sequence $(Z_t)_{t\in
\bZ}$, in which case $M(z) \equiv 0$.
 The proof of
Theorem~\ref{thm-5} will be given in Section~\ref{S6} and will make
use of both Theorem~\ref{thm-main} and Theorem~\ref{thm-4} given
below. The latter is the corresponding characterization for the
existence of weakly stationary solutions of ARMA$(p,q)$ equations,
expressed in terms of the characteristic polynomials $P(z)$ and
$Q(z)$. That $\det P(z) \neq 0$ for all $z$ on the unit circle
together with $\mathbb{E} (Z_0) = 0$ is sufficient for
the existence of weakly stationary solutions is well known, but that
the conditions given below are necessary and sufficient in higher
dimensions seems not to have appeared in the literature so far. The
proof of Theorem~\ref{thm-4}, which is similar to the proof in the
one-dimensional case, will be given in Section~\ref{S4}.

\begin{theorem} {\rm [Weak ARMA$(p,q)$ processes]} \label{thm-4} \\
Let $m,d,p\in \bN$, $q\in \bN_0$, and let $(Z_t)_{t\in \mathbb{Z}}$
be a weak white noise sequence in $\bC^d$ with expectation $\bE Z_0$
and covariance matrix $\Sigma$.  Let $\Psi_1, \ldots, \Psi_p \in
\bC^{m\times m}$ and $\Theta_0, \ldots, \Theta_q \in \bC^{m\times
d}$, and define the matrix polynomials $P(z)$ and $Q(z)$ by
\eqref{eq-char}. Let $U\in \bC^{d\times d}$ be  unitary
such that $U \Sigma U^* = \begin{pmatrix} D & 0_{s,d-s} \\
0_{d-s,s} & 0_{d-s,d-s} \end{pmatrix}$, where $D$ is a real
$(s\times s)$-diagonal matrix with the strictly positive eigenvalues
of $\Sigma$ on its diagonal for some $s\in \{0,\ldots, d\}$.  (The
matrix $U$ exists since $\Sigma$ is positive semidefinite). Then the
ARMA$(p,q)$ equation \eqref{eqpq} admits a weakly stationary
solution $(Y_t)_{t\in \bZ}$ if and only if the $\bC^{m\times
d}$-valued rational function
$$z \mapsto M(z) := P^{-1} (z) Q(z) U^* \left( \begin{array}{ll}
\mbox{\rm Id}_s  & 0_{s,d-s} \\ 0_{d-s,s} & 0_{d-s,d-s} \end{array}
\right)$$ has only removable singularities on the unit circle and if
there is some $g\in \bC^m$ such that \begin{equation} \label{eq-g2}
P(1)\, g = Q(1) \, \bE Z_0.\end{equation} In that case, a weakly
stationary solution of \eqref{eqpq} is given by
\begin{equation} \label{eq-weakly2} Y_t = g +
\sum_{j=-\infty}^\infty M_j  \, U(Z_{t-j}- \bE Z_0),\quad t\in \bZ,
\end{equation}
where $M(z) = \sum_{j=-\infty}^\infty M_j z^j$ is the Laurent
expansion of $M(z)$ in a neighbourhood of the unit circle, which
converges absolutely there.
\end{theorem}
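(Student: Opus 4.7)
The plan is to split the proof into a preliminary reduction, a sufficiency construction, and a necessity argument based on the multivariate spectral representation. Taking expectations in \eqref{eqpq}, any weakly stationary solution $(Y_t)$ has constant mean $g := \bE Y_0$ that must satisfy $P(1)g = Q(1)\,\bE Z_0$, so this linear equation must be solvable. Setting $\tilde Y_t := Y_t - g$ and $\tilde Z_t := Z_t - \bE Z_0$, the ARMA equation becomes $P(B) \tilde Y_t = Q(B) \tilde Z_t$. Writing $U\tilde Z_t = (w_t^T, v_t^T)^T$ with $w_t \in \bC^s$ and $v_t \in \bC^{d-s}$, the defining property of $U$ forces $\bE v_t = 0$ and $\Cov(v_t) = 0$, hence $v_t = 0$ a.s. Thus the problem reduces to finding a weakly stationary solution of $P(B)\tilde Y_t = \tilde Q(B) w_t$, where $\tilde Q(z)$ consists of the first $s$ columns of $Q(z)U^*$ and $(w_t)$ is $\bC^s$-valued weak white noise with strictly positive definite covariance $D$.

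For sufficiency, suppose $M(z)$ has only removable singularities on the unit circle. Since $M$ is rational, it extends holomorphically to some annulus $\{r < |z| < r^{-1}\}$ with $r < 1$, yielding a Laurent expansion $M(z) = \sum_{j\in\bZ} M_j z^j$ with $\|M_j\| \le C\rho^{|j|}$ for some $\rho \in (0,1)$. The rational identity $P(z) M(z) = Q(z) U^* \begin{pmatrix} \mbox{\rm Id}_s & 0 \\ 0 & 0 \end{pmatrix}$ translates, by matching Laurent coefficients, into a convolution identity between $(\Psi_k)$, $(M_j)$, and $(\Theta_k)$. Defining $(Y_t)$ by \eqref{eq-weakly2}, the exponential decay of $\|M_j\|$ and the finite second moment of $Z_0$ ensure absolute convergence in $L^2$ and weak stationarity. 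Substituting the convolution identity into $P(B)(Y_t - g)$ collapses the double sum to $Q(B) U^* U \tilde Z_t = Q(B)\tilde Z_t$, while $P(B) g = P(1) g = Q(1)\bE Z_0 = Q(B)\bE Z_0$, so $P(B) Y_t = Q(B) Z_t$ as required.

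For necessity, use the multivariate spectral representation $\tilde Y_t = \int_{(-\pi,\pi]} e^{it\lambda}\,\Phi_Y(d\lambda)$ and $w_t = \int_{(-\pi,\pi]} e^{it\lambda}\,\Phi_w(d\lambda)$, with $\Phi_w$ of intensity $(D/2\pi)\,d\lambda$. Substituting into the reduced ARMA equation and invoking uniqueness of the spectral representation yields the random-measure identity $P(e^{-i\lambda})\,\Phi_Y(d\lambda) = \tilde Q(e^{-i\lambda})\,\Phi_w(d\lambda)$. Off the finite set $\{\lambda : \det P(e^{-i\lambda}) = 0\}$, this gives $\Phi_Y(d\lambda) = M(e^{-i\lambda})\, U\,\Phi_{\tilde Z}(d\lambda)$, so that finiteness of $\Cov(\tilde Y_0) = \frac{1}{2\pi}\int M(e^{-i\lambda}) D\, M(e^{-i\lambda})^*\,d\lambda$ is forced. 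Since $M$ is rational and $D$ is strictly positive definite, a non-removable pole of $M$ at some $e^{-i\lambda_0}$ would contribute an integrand of order $|\lambda - \lambda_0|^{-2k}$ with $k \ge 1$, contradicting integrability; all singularities of $M$ on the unit circle are therefore removable. The main obstacle will be this last step: after the $U$-reduction, arguing rigorously that square-integrability of $M$ under the positive definite weight $D$ truly forces removability of every pole, ruling out the possibility of columns of $M$ conspiring to cancel singular contributions in the weighted integral.
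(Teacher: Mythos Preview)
Your proposal is correct and follows essentially the same route as the paper's proof: reduce to mean zero, observe that the last $d-s$ components of $U(Z_t-\bE Z_0)$ vanish a.s., verify sufficiency by direct substitution of the Laurent series, and for necessity pass to the spectral identity $d\mu_Y(\omega)=(2\pi)^{-1}M'(e^{-i\omega})M'(e^{-i\omega})^*\,d\omega$ (with $M'(z)=P^{-1}(z)Q(z)R$, $RR^*=\Sigma$) off a finite set and use integrability to rule out poles. The obstacle you flag is dispatched in the paper by the observation that for the Hermitian matrix $M'(z)M'(z)^*$ one has $\|M'(z)M'(z)^*\|_2=\|M'(z)^*\|_2^2\ge K^2|z-z_0|^{-2}$ near any pole $z_0$ of an entry of $M'$, so no cancellation among columns can occur.
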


It is easy to see that if $\Sigma$ in the theorem above is
invertible, then the condition that all singularities of $M(z)$ on
the unit circle are removable is equivalent to the condition that
all singularities of $P^{-1}(z) Q(z)$ on the unit circle are
removable.

\section{Proof of Theorem~\ref{thm-main}} \label{S5}
\setcounter{equation}{0}

 In this section we give the proof of
Theorem~\ref{thm-main}. In Section~\ref{S5a} we show that the
conditions (i) --- (iii) are necessary. The suffiency of the
conditions is proven in Section~\ref{S-sufficient}, while the
uniqueness assertion is established in Section~\ref{S-uniqueness}.

\subsection{The necessity of the conditions} \label{S5a}
Assume that $(Y_t)_{t\in \mathbb{Z}}$ is a strictly stationary
solution of equation (\ref{eq1q}). As observed before
Theorem~\ref{thm-main}, this implies that each of the equations
\eqref{block1q} admits a strictly stationary solution, where
$X_t^{(h)}$ is defined as in \eqref{eq-Xt}. Equation~\eqref{block1q}
is itself an ARMA$(1,q)$ equation with i.i.d. noise, so that for
proving  (i) -- (iii) we may assume that $H=1$, that $S = {\rm
Id}_m$ and that $\Phi := \Psi_1$ is an $m\times m$ Jordan block
corresponding to an eigenvalue $\lambda$. Hence we assume throughout
Section~\ref{S5a} that
\begin{equation} \label{eq1q2}
Y_t - \Phi Y_{t-1} = \sum_{k=0}^q \Theta_k Z_{t-k}, \quad t\in \bZ,
\end{equation}
has a strictly stationary solution with $\Phi\in \bC^{m\times m}$ of
the form \eqref{eq-Jordanblock2}, and we have to show that this
implies (i) if $|\lambda| \neq 0, 1$, (ii) if $|\lambda|=1$ but
$\lambda\neq 1$, and (iii) if $\lambda=1$. Before we do this in the
next subsections, we observe that iterating the ARMA$(1,q)$ equation
(\ref{eq1q2}) gives for $n\ge q$
\begin{eqnarray}
Y_t
&=&  \sum_{j=0}^{q-1}\Phi^j\left(\sum_{k=0}^j\Phi^{-k}\Theta_k\right)Z_{t-j}+\sum_{j=q}^{n-1}\Phi^j\left(\sum_{k=0}^q\Phi^{-k}\Theta_k\right)Z_{t-j}\notag\\
&& +
\sum_{j=0}^{q-1}\Phi^{n+j}\left(\sum_{k=j+1}^q\Phi^{-k}\Theta_k\right)Z_{t-(n+j)}+\Phi^nY_{t-n}.\label{it1}
\end{eqnarray}

\subsubsection{The case $|\lambda| \in (0,1)$.}\label{S-3-1-1}
Suppose that $|\lambda| \in (0,1)$ and let $\varepsilon  \in
(0,|\lambda|)$.
 Then there are constants $C, C'\geq 1$ such
that
\begin{eqnarray*}
\left\Vert\Phi^{-j}\right\Vert \le C \cdot|\lambda|^{-j}\cdot j^m
\leq (C') (|\lambda|-\varepsilon)^{-j} \quad\mbox{for all
}j\in\mathds{N},
\end{eqnarray*}
as a consequence of Theorem 11.1.1 in \cite{GolubVanLoan}. Hence, we
have for all $j\in \bN_0$ and $t\in \bZ$
\begin{eqnarray}
\left\Vert \left(\sum_{k=0}^q\Phi^{-k}\Theta_k\right)Z_{t-j} \right\Vert
%&\le& \left\Vert\Phi^{-j}\right\Vert\cdot\left\Vert \Phi^j\left(\sum_{k=0}^q\Phi^{-k}\Theta_k\right)Z_{t-j}\right\Vert\notag\\
&\le& {C'}(|\lambda|-\varepsilon)^{-j}\left\Vert
\Phi^{j}\left(\sum_{k=0}^q\Phi^{-k}\Theta_k\right)Z_{t-j}\right\Vert.\label{golub}
\end{eqnarray}
Now, since $\lim_{n\to\infty}\Phi^n=0$ and since  $(Y_t)_{t\in\bZ}$
and $(Z_t)_{t\in\bZ}$ are strictly stationary,  an application of
Slutsky's lemma to equation (\ref{it1}) shows that
\begin{eqnarray}
Y_t &=&
\sum_{j=0}^{q-1}\Phi^{j}\left(\sum_{k=0}^j\Phi^{-k}\Theta_k\right)Z_{t-j}
+\Prob\mbox{-}\lim_{n\to\infty}\sum_{j=q}^{n-1}\Phi^{j}\left(\sum_{k=0}^q\Phi^{-k}\Theta_k\right)Z_{t-j}.
\label{eq-uniqueness1}
\end{eqnarray}
Hence the limit on the right hand side exists and, as a sum with
independent summands, it converges almost surely. Thus it follows
from equation (\ref{golub}) and the Borel-Cantelli lemma that
\begin{eqnarray*}
\lefteqn{\sum_{j=q}^{\infty}\Prob\left(\left\Vert\sum_{k=0}^q\Phi^{-k}\Theta_kZ_{0}
\right\Vert> {C'}(|\lambda|-\varepsilon)^{-j}  \right)}\\
%&=& \sum_{j=q}^{\infty}\Prob\left(C'(|\mu_1|-\varepsilon)^j\left\Vert\sum_{k=0}^q\Phi^{-k}\Theta_kZ_{-j} \right\Vert>1  \right)\\
&\le&
\sum_{j=q}^{\infty}\Prob\left(\left\Vert\Phi^j\left(\sum_{k=0}^q\Phi^{-k}\Theta_k\right)Z_{-j}\right\Vert>1\right)<\infty,
\end{eqnarray*}
and hence $\mathds{E}\left( \log^+
\left\Vert\left(\sum_{k=0}^q\Phi^{-k}\Theta_k\right)Z_{0}\right\Vert\right)<\infty.$
Obviously, this is equivalent to condition (i).

\subsubsection{The case $|\lambda|>1$.} \label{S-3-1-2}
Suppose that $|\lambda| > 1$. Multiplying equation (\ref{it1}) by
$\Phi^{-n}$ gives for $n\ge q$
\begin{eqnarray*}
\Phi^{-n}Y_t
&=&  \sum_{j=0}^{q-1}\Phi^{-(n-j)}\left(\sum_{k=0}^j\Phi^{-k}\Theta_k\right)Z_{t-j}+\sum_{j=1}^{n-q}\Phi^{-j}\left(\sum_{k=0}^q\Phi^{-k}\Theta_k\right)Z_{t-n+j}\notag\\
&& +
\sum_{j=0}^{q-1}\Phi^{j}\left(\sum_{k=j+1}^q\Phi^{-k}\Theta_k\right)Z_{t-(n+j)}+Y_{t-n}.
\end{eqnarray*}
Defining $\tilde{\Phi}:=\Phi^{-1}$, and substituting $u=t-n$ yields
\begin{eqnarray}
Y_u
&=&  - \sum_{j=0}^{q-1}\tilde{\Phi}^{-j}\left(\sum_{k=j+1}^q\Phi^{-k}\Theta_k\right)Z_{u-j}-\sum_{j=1}^{n-q}\tilde{\Phi}^{j}\left(\sum_{k=0}^q\Phi^{-k}\Theta_k\right)Z_{u+j}\notag\\
&&
-\sum_{j=0}^{q-1}\tilde{\Phi}^{n-j}\left(\sum_{k=0}^j\Phi^{-k}\Theta_k\right)Z_{u+n-j}+\tilde{\Phi}^{n}Y_{u+n}
.\label{itminus}
\end{eqnarray}
Letting $n\to\infty$ then gives condition (i) with the same
arguments as in the case $|\lambda| \in (0,1)$.

\subsubsection{The case $|\lambda|=1$ and symmetric noise $(Z_t)$.}
\label{S-3-1-3}

Suppose that $Z_0$ is symmetric and that $|\lambda|=1$. Denoting
$$J_1 := \Phi - \lambda \, {\rm Id}_m \quad \mbox{and} \quad J_l := J_1^l
\quad \mbox{for}\quad j\in \bN_0,$$ we have
\begin{eqnarray*}
\Phi^{j} &=& \sum_{l=0}^{m-1}\binom{j}{l}\lambda^{j-l}J_l,\quad
j\in\mathds{N}_0,
\end{eqnarray*}
since $J_l=0$ for $l\geq m$ and $\binom{j}{l} = 0$ for $l>j$.
Further, since for $l\in \{0,\ldots, m-1\}$ we have
\begin{eqnarray*}
J_l=\left(e_{l+1},e_{l+2},...,e_{m},0_{m},...,0_{m}\right)\in\mathbb{C}^{m\times
m},
\end{eqnarray*}
with unit vectors $e_{l+1},...,e_m$ in $\mathbb{C}^{m}$, it is easy
to see that for $i=1,...,m$ the $i^{th}$ row of the matrix $\Phi^j$
is given by
\begin{equation}
e_i^T\Phi^{j} = \sum_{l=0}^{m-1}\binom{j}{l}\lambda^{j-l}e_i^TJ_l =
\sum_{l=0}^{i-1}\binom{j}{l}\lambda^{j-l}e_{i-l}^T, \quad j\in
\bN_0.\label{phij}
\end{equation}
It follows from equations (\ref{it1}) and (\ref{phij}) that for
$n\ge q$ and $t\in \bZ$,
\begin{eqnarray}
e_i^TY_t
&=&  \sum_{j=0}^{q-1}\left(\sum_{l=0}^{i-1}\binom{j}{l}\lambda^{j-l}e_{i-l}^T\right)\left(\sum_{k=0}^j\Phi^{-k}\Theta_k\right)Z_{t-j}\notag\\
&&+\sum_{j=q}^{n-1}\left(\sum_{l=0}^{i-1}\binom{j}{l}\lambda^{j-l}e_{i-l}^T\right)\left(\sum_{k=0}^q\Phi^{-k}\Theta_k\right)Z_{t-j}\notag\\
&& + \sum_{j=0}^{q-1}\left(\sum_{l=0}^{i-1}\binom{n+j}{l}\lambda^{n+j-l}e_{i-l}^T\right)\left(\sum_{k=j+1}^q\Phi^{-k}\Theta_k\right)Z_{t-(n+j)}\notag\\
&&+
\sum_{l=0}^{i-1}\binom{n}{l}\lambda^{n-l}e_{i-l}^TY_{t-n}.\label{it2}
\end{eqnarray}
We claim that
\begin{equation} \label{bed2c}
e_i^T \sum_{k=0}^q \Phi^{-k} \Theta_k Z_t = 0 \;\; \mbox{a.s.}\quad
\forall\; i \in \{1,\ldots, m\} \quad \forall\; t\in \bZ,
\end{equation}
  which clearly gives conditions (ii) and (iii), respectively, with
  $\alpha=\alpha_1=0_m$. Equation \eqref{bed2c} will be proved by induction on
$i=1,\ldots, m$.
  We start with $i=1$.
From equation (\ref{it2}) we know that for $n\ge q$
\begin{eqnarray}
& &{e_1^TY_t-\lambda^{n}e_{1}^TY_{t-n}-\sum_{j=0}^{q-1}\lambda^{j}e_{1}^T\left(\sum_{k=0}^j\Phi^{-k}\Theta_k\right)Z_{t-j}-\sum_{j=0}^{q-1}\lambda^{n+j}e_{1}^T\left(\sum_{k=j+1}^q\Phi^{-k}\Theta_k\right)Z_{t-(n+j)}}\notag\\
& = &
\sum_{j=q}^{n-1}\lambda^{j}e_{1}^T\left(\sum_{k=0}^q\Phi^{-k}\Theta_k\right)Z_{t-j}.\label{IAc}
\end{eqnarray}
Due to the stationarity of $(Y_t)_{t\in\mathds{Z}}$ and
$(Z_t)_{t\in\mathds{Z}}$, there exists a constant $K_1>0$ such that
\begin{eqnarray*}
&&\Prob\left(\left|
e_1^TY_t-\lambda^{n}e_{1}^TY_{t-n}-\sum_{j=0}^{q-1}\lambda^{j}e_{1}^T\left(\sum_{k=0}^j\Phi^{-k}\Theta_k\right)Z_{t-j}
\right.\right.\notag\\&&\quad\quad\quad\quad\quad\quad\quad\quad\left.\left.-\sum_{j=0}^{q-1}\lambda^{n+j}e_{1}^T\left(\sum_{k=j+1}^q\Phi^{-k}\Theta_k\right)Z_{t-(n+j)}\right|<K_1\right)\ge\frac{1}{2}\quad\forall
n\geq q.
\end{eqnarray*}
By \eqref{IAc} this implies
\begin{eqnarray}
&&\Prob\left(\left|\sum_{j=q}^{n-1}\lambda^{j}e_{1}^T\left(\sum_{k=0}^q\Phi^{-k}\Theta_k\right)Z_{t-j}\right|<K_1\right)\ge\frac{1}{2}\quad\forall
n\geq q. \label{eq-symmetrise1}
\end{eqnarray}
Therefore
$\left|\sum_{j=q}^{n-1}\lambda^{j}e_{1}^T\left(\sum_{k=0}^q\Phi^{-k}\Theta_k\right)Z_{t-j}\right|$
does not converge in probability to $+\infty$ as $n\to\infty$. Since
this is a sum of independent and symmetric terms, this implies that
it converges almost surely (see Kallenberg \cite{Kallenberg},
Theorem 4.17), and the Borel-Cantelli lemma then shows that
\begin{eqnarray*}
e_1^T\left(\sum_{k=0}^q\Phi^{-k}\Theta_k\right)Z_{t}=0,\quad t\in
\bZ,
\end{eqnarray*}
which is \eqref{bed2c} for $i=1$. With this condition, equation
(\ref{IAc}) simplifies for $t=0$ and $n\geq q$ to
%\begin{eqnarray*}
%e_1^TY_t=\sum_{j=0}^{q-1}\lambda^{j}e_{1}^T\left(\sum_{k=0}^j\Phi^{-k}\Theta_k\right)Z_{t-j}+\sum_{j=0}^{q-1}\lambda^{n+j}e_{1}^T\left(\sum_{k=j+1}^q\Phi^{-k}\Theta_k\right)Z_{t-(n+j)}+\lambda^{n}e_{1}^TY_{t-n}.
%\end{eqnarray*}
%With $t=0$ we get for $n\ge q$
\begin{eqnarray*}
e_1^TY_0-\lambda^{n}e_{1}^TY_{-n}=\sum_{j=0}^{q-1}\lambda^{j}e_{1}^T\left(\sum_{k=0}^j\Phi^{-k}\Theta_k\right)Z_{-j}+\sum_{j=0}^{q-1}\lambda^{n+j}e_{1}^T\left(\sum_{k=j+1}^q\Phi^{-k}\Theta_k\right)Z_{-(n+j)}.
\end{eqnarray*}
Now setting $t:=-n$ in the above equation, multiplying it with
$\lambda^t=\lambda^{-n}$ and recalling that $e_1^T \Phi^j =
\lambda^j e_1^T$ by \eqref{phij} yields for $t\le-q$
\begin{eqnarray*}
e_1^TY_t=-\sum_{j=0}^{q-1}e_{1}^T \Phi^j
\left(\sum_{k=j+1}^q\Phi^{-k}\Theta_k\right)Z_{t-j}+\lambda^te_1^T\left(Y_0-\sum_{j=0}^{q-1}\Phi^{j}\left(\sum_{k=0}^j\Phi^{-k}\Theta_k\right)Z_{-j}\right).
\end{eqnarray*}
For the induction step let $i\in \{2,\ldots, m\}$ and assume that
\begin{eqnarray}
e_r^T\left(\sum_{k=0}^q\Phi^{-k}\Theta_k\right)Z_{t}=0\;\;
\mbox{a.s.},\quad r\in\{1,...,i-1\},\; \; t\in \bZ,\label{IVc1}
\end{eqnarray}
together with
\begin{eqnarray}
e_r^TY_t =
-e_{r}^T\sum_{j=0}^{q-1}\Phi^j\left(\sum_{k=j+1}^q\Phi^{-k}\Theta_k\right)Z_{t-j}+\begin{cases}
\displaystyle 0, &   r\in \{1,\ldots, i-2\}, \; t \leq -rq,\\
    \displaystyle \lambda^te_r^TV_r, & r=i-1,\; t \leq -rq,
    \end{cases}\label{IVc2}
\end{eqnarray}
where
$$V_r:=\lambda^{(r-1)q} \left( Y_{-(r-1)q}
-\sum_{j=0}^{q-1}\Phi^j\left(\sum_{k=0}^j\Phi^{-k}\Theta_k\right)Z_{-j-(r-1)q}\right),
\quad r \in \{1,\ldots, m\}.$$  We are going to show that this
implies
\begin{eqnarray}
e_i^T\left(\sum_{k=0}^q\Phi^{-k}\Theta_k\right)Z_{t}=0\;
\;\mbox{a.s.},\quad t \in \bZ,\label{ISc}
\end{eqnarray}
and
\begin{eqnarray}
e_i^TY_t &=&
-e_{i}^T\sum_{j=0}^{q-1}\Phi^j\left(\sum_{k=j+1}^q\Phi^{-k}\Theta_k\right)Z_{t-j}+\lambda^te_i^TV_i
\; \; \mbox{a.s.}, \quad t \leq -iq,\label{IScBeh2}
\end{eqnarray}
together with
\begin{eqnarray}
e_{i-1}^T V_{i-1}=0.\label{IScBeh3}
\end{eqnarray}
This will then imply \eqref{bed2c}. For doing that, in a first step
we are going to prove the following:

\begin{lemma}\label{lemma}
Let $i\in \{2,\ldots, m\}$ and assume (\ref{IVc1}) and (\ref{IVc2}).
Then it holds for $t\leq -(i-1)q$ and $n\geq q$,
\begin{eqnarray}
e_i^TY_t -\lambda^ne_i^TY_{t-n}
&=&  \sum_{j=0}^{q-1}e_{i}^T\Phi^j\left(\sum_{k=0}^j\Phi^{-k}\Theta_k\right)Z_{t-j}+\sum_{j=q}^{n-1}\lambda^je_i^T\left(\sum_{k=0}^q\Phi^{-k}\Theta_k\right)Z_{t-j}\notag\\
&&+\lambda^n\sum_{j=0}^{q-1}e_{i}^T\Phi^j\left(\sum_{k=j+1}^q\Phi^{-k}\Theta_k\right)Z_{t-(n+j)}+n\lambda^{t-1}e_{i-1}^TV_{i-1},\label{ISc7}
\end{eqnarray}
\end{lemma}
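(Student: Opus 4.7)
The plan is to start from the finite iteration (\ref{it2}) for $e_i^T Y_t$, subtract $\lambda^n e_i^T Y_{t-n}$, and then collapse the resulting expression using the induction hypotheses (\ref{IVc1}) and (\ref{IVc2}), with the Vandermonde convolution $\binom{n+j}{L}=\sum_{l=0}^L\binom{n}{l}\binom{j}{L-l}$ as the closing combinatorial device.

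In more detail, the first sum on the right-hand side of (\ref{it2}) already matches term~1 of (\ref{ISc7}), since reading (\ref{phij}) backwards gives $\sum_{l=0}^{i-1}\binom{j}{l}\lambda^{j-l}e_{i-l}^T = e_i^T\Phi^j$. The middle sum $\sum_{j=q}^{n-1}$ is treated row by row: its $l=0$ component is precisely term~2 of (\ref{ISc7}), whereas every summand with $l\geq 1$ carries a factor of the form $e_{i-l}^T\bigl(\sum_{k=0}^q\Phi^{-k}\Theta_k\bigr)Z_{t-j}$ with $i-l\in\{1,\dots,i-1\}$, and therefore vanishes almost surely by (\ref{IVc1}).

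The remaining non-trivial contributions come from the third and fourth sums of (\ref{it2}) together with the subtracted $\lambda^n e_i^T Y_{t-n}$. The $l=0$ summand of the fourth sum is exactly $\lambda^n e_i^T Y_{t-n}$ and cancels directly, leaving $\sum_{l=1}^{i-1}\binom{n}{l}\lambda^{n-l}e_{i-l}^T Y_{t-n}$. Because $t\leq -(i-1)q$ and $n\geq q$, one has $t-n\leq -(i-1)q\leq -rq$ for every $r=i-l\in\{1,\dots,i-1\}$, so (\ref{IVc2}) applies: for $l\geq 2$ it yields only a $Z$-moving-average contribution, while for $l=1$ it contributes in addition $\binom{n}{1}\lambda^{n-1}\lambda^{t-n}e_{i-1}^T V_{i-1}=n\lambda^{t-1}e_{i-1}^T V_{i-1}$, which is the last term of (\ref{ISc7}).

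Finally, the two sources of $Z_{t-(n+j)}$ terms---namely the third sum of (\ref{it2}) with coefficients $\binom{n+j}{l}$, and the $Z$-moving averages just produced (which, after expanding $e_{i-l}^T\Phi^j$ via (\ref{phij}), carry coefficients $\binom{n}{l}\binom{j}{l'}$)---must combine to yield term~3 of (\ref{ISc7}), whose coefficients are $\binom{j}{L}$. Grouping by $L=l+l'$, the coefficient of $e_{i-L}^T\bigl(\sum_{k=j+1}^q\Phi^{-k}\Theta_k\bigr)Z_{t-(n+j)}$ becomes $\binom{n+j}{L}-\binom{j}{L}-\sum_{l=1}^L\binom{n}{l}\binom{j}{L-l}$, which vanishes by the Vandermonde identity. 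I expect the main difficulty to lie precisely in this last reconciliation, where three independently arising families of binomial coefficients must be shown to collapse via a single Vandermonde identity, while simultaneously tracking the hypothesis $t\leq -(i-1)q$ that legitimises every invocation of (\ref{IVc2}).
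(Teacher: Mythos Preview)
Your proposal is correct and follows essentially the same approach as the paper's proof: both start from (\ref{it2}), apply the induction hypotheses (\ref{IVc1}) and (\ref{IVc2}) to eliminate the lower-row contributions, and close the argument with Vandermonde's identity to reconcile the $Z_{t-(n+j)}$ coefficients. The only cosmetic difference is ordering: the paper first rewrites the fourth sum $\sum_{l=0}^{i-1}\binom{n}{l}\lambda^{n-l}e_{i-l}^T Y_{t-n}$ in isolation via (\ref{IVc2}) and Vandermonde (in the form $\sum_{p=1}^s\binom{j}{s-p}\binom{n}{p}=\binom{n+j}{s}-\binom{j}{s}$), and only then inserts the result back into (\ref{it2}) and invokes (\ref{IVc1}); you instead treat the four sums of (\ref{it2}) in parallel and collapse the combined $Z_{t-(n+j)}$ coefficient $\binom{n+j}{L}-\binom{j}{L}-\sum_{l=1}^L\binom{n}{l}\binom{j}{L-l}$ at the end.
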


\begin{proof} Let $t\leq -(i-1)q$ and $n\geq q$. Using  \eqref{IVc2} and
\eqref{phij}, the last summand of \eqref{it2} can be written as
\begin{eqnarray*}
\lefteqn{\sum_{l=0}^{i-1}\binom{n}{l}\lambda^{n-l}e_{i-l}^TY_{t-n}}
\notag\\
% &=& \lambda^ne_i^TY_{t-n}+\sum_{l=1}^{i-1}\binom{n}{l}\lambda^{n-l}e_{i-l}^TY_{t-n}\\
&=&
\lambda^ne_i^TY_{t-n}+\sum_{r=1}^{i-1}\binom{n}{i-r}\lambda^{n-(i-r)}e_{r}^TY_{t-n},
\notag \\
%&&\sum_{l=0}^{i-1}\binom{n}{l}\lambda^{n-l}e_{i-l}^TY_{t-n}\notag\\
&=& \lambda^ne_i^TY_{t-n}-\sum_{j=0}^{q-1}\left(\sum_{r=1}^{i-1}\sum_{l=0}^{r-1}\binom{j}{l}\binom{n}{i-r}\lambda^{n-(i-r)}\lambda^{j-l}e_{r-l}^T\right)\left(\sum_{k=j+1}^q\Phi^{-k}\Theta_k\right)Z_{t-(n+j)}\notag\\
&&+n\lambda^{t-1}e_{i-1}^T V_{i-1}\\
&=& \lambda^ne_i^TY_{t-n}-\sum_{j=0}^{q-1}\left( \sum_{s=1}^{i-1}\binom{n+j}{s}\lambda^{n+j-s}e_{i-s}^T\right)\left(\sum_{k=j+1}^q\Phi^{-k}\Theta_k\right)Z_{t-(n+j)}\notag\\
&&+\lambda^n\sum_{j=0}^{q-1}\left(\sum_{s=1}^{i-1}\binom{j}{s}\lambda^{j-s}e^T_{i-s}\right)\left(\sum_{k=j+1}^q\Phi^{-k}\Theta_k\right)Z_{t-(n+j)}+n\lambda^{t-1}e_{i-1}^TV_{i-1},
\end{eqnarray*}
where we substituted $s:= i-r+l$ and $p:= s-l$ and used
Vandermonde's identity $\sum_{p=1}^s \binom{j}{s-p} \binom{n}{p} =
\binom{n+j}{s} - \binom{j}{s}$ in the last equation. Inserting this
back into equation (\ref{it2}) and using \eqref{IVc1}, we get for
$t\leq -(i-1)q$ and $n\geq q$
\begin{eqnarray*}
\lefteqn{e_i^TY_t -\lambda^ne_i^TY_{t-n}}\\
&=&
\sum_{j=0}^{q-1}\left(\sum_{l=0}^{i-1}\binom{j}{l}\lambda^{j-l}e_{i-l}^T\right)\left(\sum_{k=0}^j\Phi^{-k}\Theta_k\right)Z_{t-j}
\\
& &
+\sum_{j=q}^{n-1}\lambda^je_i^T\left(\sum_{k=0}^q\Phi^{-k}\Theta_k\right)Z_{t-j}
 +\sum_{j=0}^{q-1} \lambda^{n+j}e_{i}^T \left(\sum_{k=j+1}^q\Phi^{-k}\Theta_k\right)Z_{t-(n+j)}\notag\\
&&+\lambda^n\sum_{j=0}^{q-1}\left(\sum_{s=1}^{i-1}\binom{j}{s}\lambda^{j-s}e^T_{i-s}\right)\left(\sum_{k=j+1}^q\Phi^{-k}\Theta_k\right)Z_{t-(n+j)}\notag\\
&&+n\lambda^{t-1}e_{i-1}^TV_{i-1}.\label{ISc4}
\end{eqnarray*}
An application of (\ref{phij}) then shows (\ref{ISc7}), completing
the proof of the lemma.\end{proof}

To continue with the induction step, we first show that
(\ref{IScBeh3}) holds true. Dividing (\ref{ISc7}) by $n$ and letting
$n\to\infty$, the strict stationarity of $(Y_t)_{t\in \bZ}$ and
$(Z_t)_{t\in \bZ}$ imply that for $t\leq -(i-1)q$,
$$n^{-1}\sum_{j=q}^{n-1}\lambda^je_i^T\left(\sum_{k=0}^q\Phi^{-k}\Theta_k\right)Z_{t-j}$$
converges in probability to $-\lambda^{t-1} e_{i-1}^T V_{i-1}$. On
the other hand, this limit in probability must be clearly measurable
with respect to the tail-$\sigma$-algebra
$\cap_{k\in\mathds{N}}\sigma(\cup_{l\ge k}\sigma(Z_{t-l}))$, which
by Kolmogorov's zero-one law is $\Prob$-trivial. Hence this
probability limit must be constant, and because of the assumed
symmetry of $Z_0$ it must be symmetric, hence is equal to 0, i.e.
\begin{eqnarray*}
e_{i-1}^TV_{i-1}=0 \; \;\mbox{a.s.},
\end{eqnarray*}
which is (\ref{IScBeh3}). Using this, we get from Lemma \ref{lemma}
that
\begin{eqnarray}
& & {e_i^TY_t
-\lambda^ne_i^TY_{t-n}-\sum_{j=0}^{q-1}e_{i}^T\Phi^j\left(\sum_{k=0}^j\Phi^{-k}\Theta_k\right)Z_{t-j}
-\lambda^n\sum_{j=0}^{q-1}e_{i}^T\Phi^j\left(\sum_{k=j+1}^q\Phi^{-k}\Theta_k\right)Z_{t-(n+j)}}\notag\\
&&=\sum_{j=q}^{n-1}\lambda^je_i^T\left(\sum_{k=0}^q\Phi^{-k}\Theta_k\right)Z_{t-j},
\quad t\leq -(i-1)q.\label{ISc9}
\end{eqnarray}
Again due to the stationarity of $ (Y_t)_{t\in\mathds{Z}}$ and $
(Z_t)_{t\in\mathds{Z}}$ there exists a constant $K_2>0$ such that
\begin{eqnarray*}
&&\Prob\left(\left|e_i^TY_t -\lambda^ne_i^TY_{t-n}-\sum_{j=0}^{q-1}e_{i}^T\Phi^j\left(\sum_{k=0}^j\Phi^{-k}\Theta_k\right)Z_{t-j}\right.\right.\\
&&\left.\left.\quad\quad-\lambda^n\sum_{j=0}^{q-1}e_{i}^T\Phi^j\left(\sum_{k=j+1}^q\Phi^{-k}\Theta_k\right)Z_{t-(n+j)}\right|<K_2\right)\ge\frac{1}{2}\quad\forall\;
n\geq q,
\end{eqnarray*}
so that
\begin{eqnarray*}
&&\Prob\left(\left|
\sum_{j=q}^{n-1}\lambda^je_i^T\left(\sum_{k=0}^q\Phi^{-k}\Theta_k\right)Z_{t-j}\right|<K_2\right)\ge\frac{1}{2}\quad\forall\;
n\geq q, \; \; \; t\leq -(i-1)q.
\end{eqnarray*}
Therefore
$\left|\sum_{j=q}^{n-1}\lambda^je_i^T\left(\sum_{k=0}^q\Phi^{-k}\Theta_k\right)Z_{t-j}\right|$
does not converge in probability to $+\infty$ as $n\to\infty$. Since
this is a sum of independent and symmetric terms, this implies that
it converges almost surely (see Kallenberg \cite{Kallenberg},
Theorem 4.17), and the Borel-Cantelli lemma then shows that $
e_i^T\left(\sum_{k=0}^q\Phi^{-k}\Theta_k\right)Z_{t}=0$ a.s. for
$t\leq -(i-1)q$ and hence for all $t\in \bZ$, which is (\ref{ISc}).
Equation (\ref{ISc9}) now simplifies for $t=-(i-1)q$ and $n\geq q$
to
\begin{eqnarray*}
\lefteqn{e_i^TY_{-(i-1)q} -\lambda^n e_i^T Y_{-(i-1)q-n}} \\
&  =
&\sum_{j=0}^{q-1}e_{i}^T\Phi^j\left(\sum_{k=0}^j\Phi^{-k}\Theta_k\right)Z_{-(i-1)q-j}+
\lambda^n\sum_{j=0}^{q-1}e_{i}^T\Phi^j\left(\sum_{k=j+1}^q\Phi^{-k}\Theta_k\right)Z_{-(i-1)q-n-j}.
\end{eqnarray*}
Multiplying this equation by $\lambda^{-n}$ and denoting
$t:=-(i-1)q-n$, it follows that for $t\le-iq$ it holds
\begin{eqnarray*}
e_i^TY_t &=&
-\sum_{j=0}^{q-1}e_{i}^T\Phi^j\left(\sum_{k=j+1}^q\Phi^{-k}\Theta_k\right)Z_{t-j}
\\ & & +\lambda^{t+(i-1)q}
e_i^T\left(Y_{-(i-1)q}-\sum_{j=0}^{q-1}\Phi^j\left(\sum_{k=0}^j\Phi^{-k}\Theta_k\right)Z_{-j-(i-1)q}\right)\\
&=&
-\sum_{j=0}^{q-1}e_{i}^T\Phi^j\left(\sum_{k=j+1}^q\Phi^{-k}\Theta_k\right)Z_{t-j}+\lambda^te_i^TV_i,
\end{eqnarray*}
which is equation (\ref{IScBeh2}). This completes the proof  of the
induction step and hence of \eqref{bed2c}. It follows that
conditions (ii) and (iii), respectively, hold with $\alpha_1=0$ if
$|\lambda|=1$ and $Z_0$ is symmetric.

\subsubsection{The case $|\lambda|=1$ and not necessarily symmetric noise $(Z_t)$.}
\label{S-3-1-4}

As in Section~\ref{S-3-1-3}, assume that $|\lambda|=1$, but not
necessarily that $Z_0$ is symmetric.
 Let
$(Y_t', Z_t')_{t\in \bZ}$ be an independent copy of $(Y_t,Z_t)_{t\in
\bZ}$ and denote $\widetilde{Y}_t := Y_t - Y_t'$ and
$\widetilde{Z}_t := Z_t - Z_t'$. Then $(\widetilde{Y}_t)_{t\in \bZ}$
is a strictly stationary solution of $\widetilde{Y}_t - \Phi
\widetilde{Y}_{t-1} = \sum_{k=0}^q \Theta_k \widetilde{Z}_{t-k}$,
and $(\widetilde{Z}_t)_{t\in \bZ}$ is i.i.d. with $\widetilde{Z}_0$
being symmetric. It hence follows from Section~\ref{S-3-1-3} that
$$\left( \sum_{k=0}^q \Phi^{q-k} \Theta_k \right) Z_0 - \left(
\sum_{k=0}^q \Phi^{q-k} \Theta_k \right) Z_0' = \left( \sum_{k=0}^q
\Phi^{q-k} \Theta_k\right) \widetilde{Z}_0 = 0.$$ Since $Z_0$ and
$Z_0'$ are independent, this implies that there is a constant
$\alpha\in \bC^m$ such that $\sum_{k=0}^q \Phi^{q-k} \Theta_k Z_0 =
\alpha$ a.s., which is \eqref{bed2a}, hence condition (ii) if
$\lambda\neq 1$. To show condition (iii) in the case $\lambda=1$,
recall that the deviation of \eqref{eq-symmetrise1} in
Section~\ref{S-3-1-3} did not need the symmetry assumption on $Z_0$.
Hence by \eqref{eq-symmetrise1} there is some constant $K_1$ such
that $ \Prob(|\sum_{j=q}^{n-1} 1^{j}e_{1}^T \alpha |<K_1)\geq 1/2$
for all $n\geq q$, which clearly implies $e_1^T \alpha = 0$ and
hence condition (iii).

\subsection{The sufficiency of the conditions}\label{S-sufficient}
Suppose that conditions (i) --- (iii) are satisfied, and let
$X_t^{(h)}$, $t\in \bZ$, $h\in \{1,\ldots, H\}$, be defined by
\eqref{eq-solution1}. The fact that $X_{t}^{(h)}$ as defined in
\eqref{eq-solution1} converges a.s. for $|\lambda_h| \in (0,1)$ is
in complete analogy to the proof in the one-dimensional case treated
in Brockwell and Lindner~\cite{BL2}, but we give the short argument
for completeness: observe that there are constants $a, b
> 0$ such that $\|\Phi_{h}^j \| \leq a e^{-bj}$ for $j\in
\bN_0$. Hence for $b' \in (0,b)$ we can estimate
\begin{eqnarray*}
\lefteqn{\sum_{j=q}^\infty \mathbb{P} \left( \left\| \Phi_{h}^{j-q}
\sum_{k=0}^q \Phi_{h}^{q-k} I_{h} S^{-1}
\Theta_k Z_{t-j} \right\| > e^{-b' (j-q)} \right)}\\
&  \leq &  \sum_{j=q}^\infty \mathbb{P} \left( \log^+ \left(a
\left\| \sum_{k=0}^q \Phi_{h}^{q-k} I_{h} S^{-1} \Theta_k Z_{t-j}
\right\|\right)
> (b-b') (j-q) \right)
 < \infty,\end{eqnarray*}
the last inequality being due to the fact that $\left\| \sum_{k=0}^q
\Phi_{h}^{q-k} I_{h} S^{-1} \Theta_k Z_{t-j} \right\|$ has the same
distribution as $\left\| \sum_{k=0}^q \Phi_{h}^{q-k} I_{h} S^{-1}
\Theta_k Z_{0} \right\|$ and the latter has finite log-moment by
\eqref{bed1a}. The Borel--Cantelli lemma then shows that the event
$\{\| \Phi_{h}^{j-q} \sum_{k=0}^q \Phi_{h}^{q-k} I_{h} S^{-1}
\Theta_k Z_{t-j} \| > e^{-b' (j-q)} \; \mbox{for infinitely many
$j$}\}$ has probability zero, giving the almost sure absolute
convergence of the series in \eqref{eq-solution1}. The almost sure
absolute convergence of \eqref{eq-solution1} if $|\lambda_h|>1$ is
established similarly.

It is obvious that $((X_t^{(1)T}, \ldots, X_{t}^{(H)T})^T)_{t\in
\bZ}$ as defined in \eqref{eq-solution1} and hence $(Y_t)_{t\in
\bZ}$ defined by \eqref{eq-solution} is strictly stationary, so it
only remains to show that $(X_t^{(h)})_{t\in \bZ}$ solves
\eqref{block1q} for each $h\in \{1,\ldots, H\}$. For
$|\lambda_h|\neq 0,1$, this is an immediate consequence of
\eqref{eq-solution1}. For $|\lambda_h|=1$, we have by
\eqref{eq-solution1} and the definition of $f_h$ that
\begin{eqnarray*}
X_{t}^{(h)} - \Phi_{h} X_{t-1}^{(h)} & = & \alpha_h +
\sum_{j=0}^{q-1} \sum_{k=0}^j \Phi_{h}^{j-k} I_{h} S^{-1} \Theta_k
Z_{t-j} - \sum_{j=1}^{q} \sum_{k=0}^{j-1} \Phi_{h}^{j-k} I_{h}
S^{-1} \Theta_k Z_{t-j}\\
& = & \alpha_h + \sum_{j=0}^{q-1} I_{h} S^{-1} \Theta_j Z_{t-j} -
\sum_{k=0}^{q-1} \Phi_{h}^{q-k} I_{h} S^{-1} \Theta_k
Z_{t-q} \\
 &= &   I_{h} S^{-1} \sum_{j=0}^q \Theta_j Z_{t-j},
 \end{eqnarray*}
where the last equality follows from \eqref{bed2a}. Finally, if
$\lambda_h=0$, then $\Phi_h^j=0$ for $j\geq m$, implying that
$X_t^{(h)}$ defined by \eqref{eq-solution1} solves \eqref{block1q}
also in this case.

\subsection{The uniqueness of the solution} \label{S-uniqueness}

Suppose that $|\lambda_h|\neq 1$ for all $h\in \{1,\ldots, H\}$ and
let $(Y_t)_{t\in\bZ}$ be a strictly stationary solution of
\eqref{eq1q}. Then $(X_t^{(h)})_{t\in \bZ}$, as defined by
\eqref{eq-Xt}, is a strictly stationary solution of \eqref{block1q}
for each $h\in \{1,\ldots, H\}$. It then follows as in
Section~\ref{S-3-1-1} that by the equation corresponding to
\eqref{eq-uniqueness1}, $X_t^{(h)}$ is uniquely determined if
$|\lambda_h|\in (0,1)$. Similarly, $X_t^{(h)}$ is uniquely
determined if $|\lambda_h|>1$. The uniqueness of $X_{t}^{(h)}$ if
$\lambda_h=0$ follows from the equation corresponding to \eqref{it1}
with $n\geq m$, since then $\Phi_h^j = 0$ for $j\geq m$. We conclude
that $((X_t^{(1)T},\ldots, X_t^{(H)T})^T)_{t\in \bZ}$ is unique and
hence so is $(Y_t)_{t\in \bZ}$.

Now suppose that there is $h\in \{1,\ldots, H\}$ such that
$|\lambda_h|=1$. Let $U$ be a random variable which is uniformly
distributed on $[0,1)$ and independent of $(Z_t)_{t\in \bZ}$. Then
$(R_t)_{t\in \bZ}$, defined by $R_t := \lambda_h^t (0 ,\ldots 0 ,
e^{2\pi i U} )^T \in \bC^{r_{h+1}-r_h}$, is strictly stationary and
independent of $(Z_t)_{t\in \bZ}$ and satisfies $R_t - \Phi_h
R_{t-1} = 0$. Hence, if $(Y_t)_{t\in \bZ}$ is the strictly
stationary solution of \eqref{eq1q} specified by
\eqref{eq-solution1} and \eqref{eq-solution}, then
$$Y_t + S
(0_{r_2-r_1}^T, \ldots, 0_{r_h-r_{h-1}}^T, R_t^T,
0_{r_{h+2}-r_{h+1}}^T, \ldots, 0_{r_{H+1}-r_H}^T)^T, \quad t\in
\bZ,$$ is another strictly stationary solution of \eqref{eq1q},
violating uniqueness.

\section{Proof of Theorem~\ref{thm-4}} \label{S4}
\setcounter{equation}{0}

In this section we shall prove Theorem~\ref{thm-4}. Denote
$$R := U^* \begin{pmatrix} D^{1/2} & 0_{s,d-s} \\
0_{d-s,s} & 0_{d-s,d-s} \end{pmatrix} \quad \mbox{and} \quad W_t :=
\begin{pmatrix} D^{-1/2} & 0_{s,d-s}
\\ 0_{d-s,s} & 0_{d-s,d-s} \end{pmatrix} U (Z_t- \bE Z_0) , \quad t \in \bZ,$$
 where $D^{1/2}$ is the unique diagonal matrix with strictly
positive eigenvalues such that $(D^{1/2})^2 = D$. Then $(W_t)_{t\in
\bZ}$ is a white noise sequence in $\bC^d$ with expectation 0 and
covariance matrix $\begin{pmatrix} \mbox{\rm Id}_s & 0_{s,d-s} \\
0_{d-s,s} & 0_{d-s,d-s} \end{pmatrix}$. It is further clear that all
singularities of $M(z)$ on the unit circle are removable if and only
if all singularities of $M'(z):= P^{-1}(z) Q(z) R$ on the unit
circle are removable, and in that case, the Laurent expansions of
both $M(z)$ and $M'(z)$ converge almost surely absolutely in a
neighbourhood of the unit circle.

To see the sufficiency of the condition, suppose that \eqref{eq-g2}
has a solution $g$ and that $M(z)$ and hence $M'(z)$ have only
removable singularities on the unit circle. Define $Y=(Y_t)_{t\in
\bZ}$ by \eqref{eq-weakly2}, i.e.
$$Y_t = g + \sum_{j=-\infty}^\infty M_j \left( \begin{array}{ll}
D^{1/2} &  0_{s,d-s} \\ 0_{d-s,s} & 0_{d-s,d-s} \end{array} \right)
W_{t-j} = g + M'(B) W_t, \quad t\in \bZ.$$ The series converges
almost surely absolutely due to the exponential decrease of the
entries of $M_j$ as $|j|\to\infty$. Further,  $Y$ is clearly weakly
stationary, and since the last $(d-s)$ components of $U (Z_t - \bE
Z_0)$ vanish, having expectation zero and variance zero, it follows
that
$$R W_t = U^* \begin{pmatrix} \mbox{\rm Id}_s & 0_{s,d-s} \\ 0_{d-s,s} &
0_{d-s,d-s} \end{pmatrix} U(Z_t-\bE Z_0) = U^* U (Z_t-\bE Z_0) =
Z_t- \bE Z_0, \quad t\in \bZ.$$ We conclude that
$$P(B) (Y_t-g) = P(B) M'(B) W_t = P(B) P^{-1}(B) Q(B) R W_t  = Q(B) (Z_t-\bE Z_0), \quad t\in \bZ.$$
Since $P(1) g = Q(1) \bE Z_0$, this shows that $(Y_t)_{t\in \bZ}$ is
a weakly stationary solution of \eqref{eqpq}.

Conversely, suppose that $Y=(Y_t)_{t\in \bZ}$ is a weakly stationary
solution of \eqref{eqpq}. Taking expectations in \eqref{eqpq} yields
$P(1) \, \bE Y_0 = Q(1) \, \bE Z_0$, so that \eqref{eq-g2} has a
solution. The $\bC^{m\times m}$-valued spectral measure $\mu_Y$ of
$Y$ satisfies
$$P(e^{-i \omega}) \, d\mu_Y(\omega) \, P(e^{-i\omega})^* =
\frac{1}{2\pi} Q(e^{-i\omega}) \Sigma Q(e^{-i\omega})^* \, d\omega,
\quad \omega \in (-\pi, \pi].$$ It follows that, with the finite set
$N:= \{ \omega \in (-\pi,\pi]: P(e^{-i\omega}) = 0\}$,
$$d\mu_Y(\omega) = \frac{1}{2\pi} P^{-1}( e^{-i\omega}) Q(e^{-i\omega}) \Sigma
Q(e^{-i \omega})^* P^{-1} (e^{-i\omega})^* \, d\omega \quad
\mbox{on}\quad (-\pi,\pi] \setminus N.$$ Observing that $R R^*=
\Sigma$, it follows that the function $\omega \mapsto
M'(e^{-i\omega}) M'(e^{-i \omega})^*$ must be integrable on
$(-\pi,\pi] \setminus N$. Now assume that the matrix rational
function $M'$ has a non-removable singularity at $z_0$ with $|z_0| =
1$ in at least one matrix element. This must then be a pole of order
$r\geq 1$. Denoting the spectral norm by $\|\cdot\|_2$ it follows
that there are $\varepsilon
> 0$ and $K>0$ such that
$$\|M'(z)^*\|_2 \geq K |z-z_0|^{-1} \quad \forall\; z\in \bC:
|z|=1, z\neq z_0, |z-z_0| \leq \varepsilon;$$ this may be seen by
considering first the row sum norm of $M'(z)^*$ and then using the
equivalence of norms. Since the matrix $M'(z) M'(z)^*$ is hermitian,
we conclude that
$$\| M'(z) M'(z)^*\|_2 = \sup_{v\in \bC^n: |v|=1} |v^*M'(z) M'(z)^* v| =
\sup_{v\in \bC^n: |v|=1} |M'(z)^* v|^2 \geq K^2 |z-z_0|^2$$ for all
$z\neq z_0$ on the unit circle such that $|z-z_0| \leq \varepsilon$.
But this implies that $\omega \mapsto M'(e^{-i\omega}) M'(e^{-i
\omega})^*$ cannot be integrable on $(-\pi,\pi] \setminus N$, giving
the desired contradiction. This finishes the proof of
Theorem~\ref{thm-4}.

\section{Proof  of Theorem~\ref{thm-5}} \label{S6}
\setcounter{equation}{0}

In this section we shall prove  Theorem~\ref{thm-5}.  For that, we
first observe that ARMA$(p,q)$ equations can be embedded into higher
dimensional ARMA$(1,q)$ processes, as stated in the following
proposition. This is well known and its proof is immediate, hence
omitted.

\begin{proposition} \label{thm-2}
Let $m,d, p\in \bN$, $q\in \bN_0$, and let $(Z_t)_{t\in \bZ}$ be an
i.i.d. sequence of $\bC^d$-valued random vectors. Let $\Psi_1,
\ldots, \Psi_p \in \bC^{m\times m}$ and $\Theta_0, \ldots, \Theta_q
\in \bC^{m\times d}$ be complex-valued matrices. Define the matrices
$\underline{\Phi} \in \bC^{mp\times mp}$ and $\underline{\Theta}_k
\in \bC^{mp \times d}$, $k\in \{0,\ldots, q\}$, by
\begin{equation}
\underline{\Phi} := \begin{pmatrix} \Psi_1 & \Psi_2 & \cdots &
\Psi_{p-1} & \Psi_p \\ \mbox{\rm Id}_m & 0_{m,m} & \cdots & 0_{m,m}
& 0_{m,m} \\
0_{m,m} &  \ddots & \ddots & \vdots & \vdots\\
\vdots & \ddots & \ddots & 0_{m,m} & \vdots \\
0_{m,m} & \cdots & 0_{m,m} & \mbox{\rm Id}_m & 0_{m,m}
\end{pmatrix} \quad \mbox{and}\quad
\underline{\Theta}_k = \begin{pmatrix} \Theta_k  \\
0_{m,d}  \\
\vdots  \\
0_{m,d}  \end{pmatrix}. \label{eq-def-Phi}
\end{equation}
 Then
 the
ARMA$(p,q)$ equation \eqref{eqpq} admits a strictly stationary
solution $(Y_t)_{t\in \bZ}$ of $m$-dimensional random vectors $Y_t$
if and only if the ARMA$(1,q)$ equation
\begin{equation} \label{eq1q-gross}
\underline{Y}_t - \underline{\Phi} \, \underline{Y}_{t-1} =
\underline{\Theta}_0 {Z}_t + \underline{\Theta}_1 {Z}_{t-1} + \ldots
+ \underline{\Theta}_q {Z}_{t-q}, \quad t\in \bZ,
\end{equation}
admits a strictly stationary solution $(\underline{Y}_t)_{t\in \bZ}$
of $mp$-dimensional random vectors $\underline{Y}_t$. More
precisely, if $(Y_t)_{t\in \bZ}$ is a strictly stationary solution
of \eqref{eqpq}, then
\begin{equation} \label{eq-gross-Y}
(\underline{Y}_t)_{t\in \bZ} := ((Y_t^T , Y_{t-1}^T , \ldots,
Y_{t-(p-1)}^T)^T)_{t\in \bZ}
\end{equation}
is a strictly stationary solution of \eqref{eq1q-gross}, and
conversely, if $(\underline{Y}_t)_{t\in \bZ} = (({Y_t^{(1)T}},
\ldots, {Y_{t}^{(p) T}})^T)_{t\in \bZ}$ with random components
$Y_t^{(i)} \in \bC^m$ is a strictly stationary solution of
\eqref{eq1q-gross}, then $(Y_t)_{t\in \bZ} := (Y_{t}^{(1)})_{t\in
\bZ}$ is a strictly stationary solution of \eqref{eqpq}.
\end{proposition}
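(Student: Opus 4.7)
\medskip

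\noindent\textbf{Proof plan.} The statement is a direct translation between block-matrix notation and the ARMA$(p,q)$ recursion, so my plan is simply to verify both directions of the equivalence componentwise, relying on the block structure of $\underline{\Phi}$ and $\underline{\Theta}_k$ given in \eqref{eq-def-Phi}.

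First, I would assume that $(Y_t)_{t\in\bZ}$ is a strictly stationary solution of \eqref{eqpq} and define $(\underline{Y}_t)_{t\in\bZ}$ by \eqref{eq-gross-Y}. Strict stationarity of $(\underline{Y}_t)_{t\in\bZ}$ follows immediately, since its finite-dimensional distributions are obtained from those of $(Y_t)_{t\in\bZ}$ by permutations of indices. To verify \eqref{eq1q-gross}, I would compute $\underline{Y}_t - \underline{\Phi}\,\underline{Y}_{t-1}$ block by block. The top $m$ rows give
\[
Y_t - \Psi_1 Y_{t-1} - \Psi_2 Y_{t-2} - \cdots - \Psi_p Y_{t-p},
\]
which by \eqref{eqpq} equals $\sum_{k=0}^q \Theta_k Z_{t-k}$, matching the top block $\Theta_k$ of $\underline{\Theta}_k$. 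For each subsequent block $i\in\{2,\ldots,p\}$, the structure of $\underline{\Phi}$ gives $Y_{t-(i-1)} - Y_{t-(i-1)} = 0_{m}$, matching the zero blocks of $\underline{\Theta}_k$.

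Conversely, assume $(\underline{Y}_t)_{t\in\bZ}=((Y_t^{(1)T},\ldots,Y_t^{(p)T})^T)_{t\in\bZ}$ is a strictly stationary solution of \eqref{eq1q-gross}. Reading off rows $i=2,\ldots,p$ of \eqref{eq1q-gross} yields $Y_t^{(i)} = Y_{t-1}^{(i-1)}$ almost surely for every $t\in\bZ$, and iterating gives $Y_t^{(i)} = Y_{t-(i-1)}^{(1)}$ a.s. Hence $(\underline{Y}_t)_{t\in\bZ}$ is necessarily of the form \eqref{eq-gross-Y} with $Y_t := Y_t^{(1)}$, up to an almost-sure identification. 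Substituting this into the top block of \eqref{eq1q-gross} then recovers \eqref{eqpq} a.s. Strict stationarity of $(Y_t)_{t\in\bZ}$ follows because its finite-dimensional distributions are marginals of those of $(\underline{Y}_t)_{t\in\bZ}$.

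There is no real obstacle here: the argument is a pure bookkeeping check, and the only mild care needed is to note that the identities $Y_t^{(i)} = Y_{t-(i-1)}^{(1)}$ hold almost surely simultaneously for all $t\in\bZ$ (a countable intersection of a.s. events), so the stacked process agrees a.s. with \eqref{eq-gross-Y} applied to $Y = Y^{(1)}$. This is why the authors declare the proof immediate and omit it.
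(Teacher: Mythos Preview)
Your proposal is correct and is precisely the elementary block-by-block verification that the paper has in mind; the authors themselves state that the proof ``is immediate, hence omitted,'' and what you have written is exactly the routine check they are alluding to.
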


For the proof of Theorem~\ref{thm-5} we need some notation: define
$\underline{\Phi}$ and $\underline{\Theta}_k$ as in
\eqref{eq-def-Phi}. Choose an invertible $\bC^{mp\times mp}$ matrix
$\underline{S}$ such that $\underline{S}^{-1} \underline{\Phi}
\underline{S}$ is in Jordan canonical form, with ${H}$ Jordan blocks
$\underline{\Phi}_1, \ldots, \underline{\Phi}_{H}$, say, the
$h^{th}$ Jordan block $\underline{\Phi}_h$ starting in row
$\underline{r}_{h}$, with $\underline{r}_1 := 1 < \underline{r}_2 <
\cdots < \underline{r}_H < mp+1 =: \underline{r}_{H+1}$. Let
$\underline{\lambda}_h$ be the eigenvalue associated with
$\underline{\Phi}_h$, and, similarly to \eqref{def-Is}, denote by
$\underline{I}_h$ the $(\underline{r}_{h+1} - \underline{r}_h)\times
mp$-matrix with components $\underline{I}_h (i,j) = 1$ if $j=i +
\underline{r}_h -1$ and $\underline{I}_h (i,j) = 0$ otherwise. For
$h\in \{1,\ldots, H\}$ and $j\in \bZ$ let
\begin{equation*}
N_{j,h} :=  \begin{cases} \mathbf{1}_{j\geq 0}
 \underline{\Phi}_{h}^{j-q}
\sum_{k=0}^{j\wedge q} \underline{\Phi}_{h}^{q-k} \underline{I}_{h}
\underline{S}^{-1} \underline{\Theta}_k, & |\underline{\lambda}_h|
\in (0,1),
\\
 -\mathbf{1}_{j\leq q-1}  \underline{\Phi}_{h}^{j-q}
\sum_{k=(1+j)\vee 0}^{q} \underline{\Phi}_{h}^{q-k}
\underline{I}_{h} \underline{S}^{-1}
\underline{\Theta}_k , & |\underline{\lambda}_h| > 1,\\
\mathbf{1}_{j\in \{0,\ldots, mp+q-1\}}  \sum_{k=0}^{j\wedge q}
\underline{\Phi}_h^{j-k} \underline{I}_h
\underline{S}^{-1}\underline{\Theta}_k, & \underline{\lambda}_h = 0,\\
\mathbf{1}_{j\in \{0,\ldots, q-1\}}
\sum_{k=0}^j\underline{\Phi}_{h}^{j-k}\underline{I}_{h}
\underline{S}^{-1}\underline{\Theta}_k , & |\underline{\lambda}_h| =
1,
\end{cases} \end{equation*} and
\begin{equation} \label{def-Nj}
\underline{N}_j := \underline{S}^{-1} (N_{j,1}^T, \ldots,
N_{j,H}^T)^T \in \bC^{mp \times d}.
\end{equation}
Further, let
 $U$ and $K$ be defined as in the statement of the theorem, and
denote  $$W_t := U Z_t, \quad t\in\bZ.$$  Then $(W_t)_{t\in \bZ}$ is
an i.i.d. sequence. Equation~\eqref{eq-uw} is then an easy
consequence of the fact that for $a\in \bC^d$ the distribution of
$a^*W_0 = (U^* a)^* Z_0$ is degenerate to a Dirac measure if and
only if $U^* a \in K$, i.e. if $a \in UK = \{0_s\} \times
\bC^{d-s}$: taking for $a$ the $i^{th}$ unit vector in $\bC^d$ for
$i\in \{s+1,\ldots, d\}$, we see that $W_t$ must be of the form
$(w_t^T,u^T)^T$ for some $u\in \bC^{d-s}$, and taking $a= (b^T,
0_{d-s}^T)^T$ for $b\in \bC^{s}$ we see that $b^* w_0$ is not
degenerate to a Dirac measure for $b\neq 0_{s}$. The remaining proof
of the necessity of the conditions, the sufficiency of the
conditions and the stated uniqueness will be given in the next
subsections.

\subsection{The necessity of the conditions} \label{S-5-1}

Suppose that $(Y_t)_{t\in \bZ}$ is a strictly stationary solution of
\eqref{eqpq}. Define $\underline{Y}_t$ by \eqref{eq-gross-Y}.  Then
$(\underline{Y}_t)_{t\in \bZ}$ is a strictly stationary solution of
\eqref{eq1q-gross} by Proposition~\ref{thm-2}. Hence, by
Theorem~\ref{thm-main}, there is $\underline{f}' \in \bC^{mp}$, such
that $(\underline{Y}_t')_{t\in \bZ}$, defined by
\begin{equation} \label{eq-Yt-prime}
\underline{Y}_t' = \underline{f}' + \sum_{j=-\infty}^\infty
\underline{N}_j Z_{t-j}, \quad t \in \bZ,
\end{equation}
is (possibly another) strictly stationary solution of
$$\underline{Y}_t' - \underline{\Phi}\, \underline{Y}_{t-1}' =
\sum_{k=0}^q \underline{\Theta}_k Z_{t-k} = \sum_{k=0}^q
\widetilde{\underline{\Theta}}_k W_{t-k}, \quad t \in \bZ,$$ where
$\widetilde{\underline{\Theta}}_k := \underline{\Theta}_k U^*$. The
sum in \eqref{eq-Yt-prime} converges almost surely absolutely. Now
define $A_h \in \bC^{(\underline{r}_{h+1} - \underline{r}_h) \times
s}$ and $C_h \in \bC^{(\underline{r}_{h+1} - \underline{r}_h) \times
(d-s)}$  for $h\in \{1,\ldots, H´\}$ such that
$|\underline{\lambda}_h|=1$ by
\begin{equation} \label{eq-AB}
(A_h , C_h) := \sum_{k=0}^q \underline{\Phi}_h^{q-k} \underline{I}_h
\underline{S}^{-1} \widetilde{\underline{\Theta}}_k .
\end{equation}
By conditions (ii) and (iii) of Theorem~\ref{thm-main}, for every
such $h$ with $|\underline{\lambda}_h|=1$ there exists a vector
$\underline{\alpha}_h = (\alpha_{h,1}, \ldots,
\alpha_{h,\underline{r}_{h+1}-\underline{r}_h})^T \in
\bC^{\underline{r}_{h+1} - \underline{r}_h}$ such that
$$(A_h, C_h) W_0 = \underline{\alpha}_h \quad
\mbox{a.s.}$$ with $\alpha_{h,1} = 0$ if $\underline{\lambda}_h=1$.
Since $W_0 = (w_0^T , u^T)^T$, this implies $A_h w_0 =
\underline{\alpha}_h - C_h u$, but since $b^* w_0$ is not degenerate
to a Dirac measure for any $b\in \bC^s \setminus \{ 0_s\}$, this
gives $A_h=0$ and hence $C_h u = \underline{\alpha}_h$ for $h\in
\{1,\ldots, H\}$ such that $|\underline{\lambda}_h|=1$. Now let
$v\in \bC^s$ and $(W_t'')_{t\in \bZ}$ be an i.i.d. $N( \left(
\begin{array}{c} v \\ u \end{array} \right) , \left( \begin{array}{ll}
\mbox{\rm Id}_s & 0_{s,d-s} \\ 0_{d-s,s} & 0_{d-s,d-s} \end{array}
\right))$-distributed sequence, and let $Z_t'' := U^* W_t''$.
 Then
$$ (A_h, C_h) W_0'' = C_h u = \underline{\alpha}_h \quad \mbox{a.s.} \quad \mbox{for}\quad h\in \{1,\ldots, H\}:
|\underline{\lambda}_h|=1$$ and
$$\bE \log^+ \left\| \sum_{k=0}^q
\underline{\Phi}_h^{q-k} \underline{I}_h \underline{S}^{-1}
\widetilde{\underline{\Theta}}_k W_0''\right\|< \infty \quad
\mbox{for}\quad h\in \{1,\ldots, H\}: |\underline{\lambda}_h|\neq
0,1.$$
 It then follows from
Theorem~\ref{thm-main} that there is a strictly stationary solution
$\underline{Y}_t''$ of the ARMA$(1,q)$ equation $\underline{Y}_t'' -
\underline{\Phi} \, \underline{Y}_{t-1}'' = \sum_{k=0}^q
\widetilde{\underline{\Theta}}_k W_{t-k}''=\sum_{k=0}^q
\underline{\Theta}_k Z_{t-k}''$, which can be written in the form
$\underline{Y}_t'' = \underline{f}''+ \sum_{j=-\infty}^\infty
\underline{N}_j Z_{t-j}''$ for some $\underline{f}'' \in \bC^{mp}$.
In particular, $(\underline{Y}_t'')_{t\in \bZ}$ is a Gaussian process.
Again from Proposition~\ref{thm-2} it follows that there is a
Gaussian process $(Y_t'')_{t\in \bZ}$ which is a strictly stationary
solution of
$$ Y_t'' - \sum_{k=1}^p \Psi_k Y_{t-k}'' = \sum_{k=0}^q
{\widetilde{\Theta}}_k W_{t-k}'' = \sum_{k=0}^q {\Theta}_k Z_{t-k}''
, \quad t \in \bZ.$$  In particular, this solution is also weakly
stationary. Hence it follows from Theorem~\ref{thm-4} that
$z\mapsto M(z)$ has only removable singularities on the unit circle
and that \eqref{eq-g} has a solution $g\in \bC^{m}$, since $\bE
Z_0'' = U^* (v^T, u^T)^T$. Hence we have established that (i) and
(iii'), and hence (iii), of Theorem~\ref{thm-5} are necessary
conditions for a strictly stationary solution to exist.

 To see the necessity of
conditions (ii) and (ii'), we need the following lemma, which is
interesting in itself since it expresses the Laurent coefficients of
$M(z)$ in terms of the Jordan canonical decomposition of
$\underline{\Phi}$.

\begin{lemma} \label{lem-3}
With the notations of Theorem~\ref{thm-5} and those introduced after
Proposition~\ref{thm-2}, suppose that condition (i) of
Theorem~\ref{thm-5} holds, i.e. that $M(z)$ has only removable
singularities on the unit circle. Denote by $M(z) =
\sum_{j=-\infty}^\infty M_j z^j$ the Laurent expansion of $M(z)$ in
a neighborhood of the unit circle. Then
\begin{equation} \label{def-Mj}
\underline{M}_j := ( M_j^T ,M_{j-1}^T , \ldots, M_{j-p+1}^T)^T =
\underline{N}_j U^* \left( \begin{array}{ll} \mbox{\rm Id}_s &
0_{s,d-s} \\ 0_{d-s,s} & 0_{d-s,d-s} \end{array} \right) \quad
\forall\; j \in \bZ.
\end{equation}
In particular,
\begin{equation} \label{eq-Nt2}
\underline{M}_j U Z_{t-j} = \underline{N}_j  Z_{t-j} -
\underline{N}_j U^* (0_s^T, u^T)^T \quad \forall\; j,t \in \bZ.
\end{equation}
\end{lemma}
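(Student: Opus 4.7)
The plan is to identify both sides of \eqref{def-Mj} as the coefficient of $z^j$ in the Laurent expansion around the unit circle of one and the same $\bC^{mp\times d}$-valued matrix rational function. Write $\Pi := \begin{pmatrix}\mbox{\rm Id}_s & 0_{s,d-s}\\ 0_{d-s,s} & 0_{d-s,d-s}\end{pmatrix}$ and set $F(z) := (\mbox{\rm Id}_{mp} - \underline{\Phi} z)^{-1} \bigl(\sum_{k=0}^q \underline{\Theta}_k z^k\bigr) U^* \Pi$.

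For the left-hand side of \eqref{def-Mj}, the companion structure of $\underline{\Phi}$ (with $\underline{\Theta}_k = E_1^T \Theta_k$, where $E_1 := (\mbox{\rm Id}_m, 0_{m,m},\ldots,0_{m,m})\in \bC^{m\times mp}$) yields the identity of matrix rational functions $(\mbox{\rm Id}_{mp} - \underline{\Phi} z)^{-1} \sum_k \underline{\Theta}_k z^k = (P^{-1}(z) Q(z),\, zP^{-1}(z) Q(z),\, \ldots,\, z^{p-1} P^{-1}(z) Q(z))^T$; this is essentially the transfer-function identity underlying Proposition~\ref{thm-2}. Consequently the $i$-th block of $m$ rows of $F(z)$ equals $z^{i-1} M(z)$, which by hypothesis~(i) has only removable singularities on the unit circle. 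Hence $F(z)$ admits a Laurent expansion on an annulus around the unit circle whose coefficient of $z^j$ is exactly $\underline{M}_j$ as given in \eqref{def-Mj}.

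For the right-hand side, I re-expand $F(z)$ through the Jordan decomposition $\underline{\Phi} = \underline{S}\,\underline{J}\,\underline{S}^{-1}$ with $\underline{J}$ block diagonal with blocks $\underline{\Phi}_h$: left-multiplication of $F$ by $\underline{S}^{-1}$ followed by extraction of the $h$-th block of rows gives
\begin{equation*}
G_h(z) := (\mbox{\rm Id} - \underline{\Phi}_h z)^{-1}\, \underline{I}_h \underline{S}^{-1} \Bigl(\sum_{k=0}^q \underline{\Theta}_k z^k\Bigr) U^* \Pi,
\end{equation*}
and each $G_h(z)$ inherits the removability of singularities on the unit circle from $F(z)$, since removability is preserved under left multiplication by a constant matrix and under submatrix selection. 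It now suffices to show that the Laurent coefficient of $z^j$ of $G_h$ around the unit circle equals $N_{j,h} U^*\Pi$, since stacking over $h$ and left-multiplying by $\underline{S}$ then produces $\underline{N}_j U^*\Pi$, necessarily equal to $\underline{M}_j$ by uniqueness of Laurent expansions. For $|\underline{\lambda}_h|\in(0,1)$ or $\underline{\lambda}_h = 0$, the geometric expansion $(\mbox{\rm Id} - \underline{\Phi}_h z)^{-1} = \sum_{l\ge 0}\underline{\Phi}_h^l z^l$ (finite when $\underline{\lambda}_h = 0$) converges near the unit circle, and convolving with $\sum_k \underline{I}_h \underline{S}^{-1} \underline{\Theta}_k z^k$ reproduces the defining formula for $N_{j,h}$; for $|\underline{\lambda}_h|>1$, the expansion $(\mbox{\rm Id} - \underline{\Phi}_h z)^{-1} = -\sum_{l\ge 1}\underline{\Phi}_h^{-l} z^{-l}$ is valid on $\{|z|>|\underline{\lambda}_h|^{-1}\}$ and plays the same role.

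The main technical obstacle is the case $|\underline{\lambda}_h|=1$. Here the only potential singularity of $G_h$ (at $z=\underline{\lambda}_h^{-1}$) is removable by the argument above, so $G_h$ extends to an entire function; together with the growth bound $(\mbox{\rm Id} - \underline{\Phi}_h z)^{-1}=O(z^{-1})$ at infinity and $\deg(\sum_k \underline{\Theta}_k z^k)=q$, this forces $G_h$ to be a polynomial of degree at most $q-1$. Writing $G_h(z) = \sum_{j=0}^{q-1} a_{j,h} z^j$ and matching coefficients in $(\mbox{\rm Id} - \underline{\Phi}_h z) G_h(z) = \underline{I}_h \underline{S}^{-1}\sum_k \underline{\Theta}_k z^k\, U^*\Pi$ yields $a_{0,h} = \underline{I}_h \underline{S}^{-1} \underline{\Theta}_0 U^*\Pi$ together with $a_{j,h} = \underline{\Phi}_h a_{j-1,h} + \underline{I}_h \underline{S}^{-1} \underline{\Theta}_j U^*\Pi$ for $1\le j\le q-1$; the residual identity at $z^q$, namely $\sum_{k=0}^q \underline{\Phi}_h^{q-k} \underline{I}_h \underline{S}^{-1} \underline{\Theta}_k U^*\Pi = 0$, holds automatically because $G_h$ has degree less than $q$. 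Unwinding the recursion gives $a_{j,h} = N_{j,h} U^*\Pi$ for all $j\in\{0,\ldots,q-1\}$, as required. Finally, \eqref{eq-Nt2} follows at once from \eqref{def-Mj}: since $UZ_{t-j} = (w_{t-j}^T, u^T)^T$ almost surely, $U^*\Pi\, UZ_{t-j} = U^*(w_{t-j}^T, 0_{d-s}^T)^T = Z_{t-j} - U^*(0_s^T, u^T)^T$, so $\underline{M}_j UZ_{t-j} = \underline{N}_j U^*\Pi\, UZ_{t-j} = \underline{N}_j Z_{t-j} - \underline{N}_j U^*(0_s^T, u^T)^T$.
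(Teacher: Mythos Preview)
Your proof is correct and takes a genuinely different route from the paper's. The paper argues probabilistically: it introduces an auxiliary i.i.d.\ Gaussian sequence $(Z_t')$ with covariance $U^*\Pi U$, uses Theorem~\ref{thm-4} to obtain the weakly stationary solution $\underline{Y}_t' = \sum_j \underline{M}_j U Z_{t-j}'$ of \eqref{eq-gross-Q} and Theorem~\ref{thm-main} to obtain a second solution $\underline{Y}_t'' = \sum_j \underline{N}_j Z_{t-j}'$; multiplying by ${Z_{t-j}'}^T$ and taking expectations yields recursions for $\underline{M}_j$ and $\underline{N}_j U^*\Pi$ showing that $L_j := \underline{M}_j - \underline{N}_j U^*\Pi$ satisfies $L_j = \underline{\Phi} L_{j-1}$ with geometric decay, and a block-by-block Jordan analysis then forces $L_j = 0$. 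You bypass probability entirely, recognising both $\underline{M}_j$ and $\underline{N}_j U^*\Pi$ as the $z^j$-coefficient of the single transfer function $F(z)=(\mbox{\rm Id}_{mp}-\underline{\Phi} z)^{-1}\bigl(\sum_k\underline{\Theta}_k z^k\bigr)U^*\Pi$: the companion-matrix identity gives the $\underline{M}_j$ side, and block-wise Laurent expansion through the Jordan form gives the $\underline{N}_j$ side, with the delicate $|\underline{\lambda}_h|=1$ case handled by a Liouville-type argument (removability plus the $O(z^{q-1})$ growth bound forces $G_h$ to be a polynomial of degree at most $q-1$). Your approach is more direct and makes the algebraic content of the identity transparent; the paper's has the virtue of re-using Theorems~\ref{thm-main} and~\ref{thm-4} rather than introducing new complex-analytic input. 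Both ultimately rest on the same Jordan-block trichotomy, and your derivation of \eqref{eq-Nt2} from \eqref{def-Mj} coincides with the paper's.
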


\begin{proof} Define $\Lambda := \left( \begin{array}{ll} \mbox{\rm Id}_s & 0_{s,d-s} \\ 0_{d-s,s} &
0_{d-s,d-s} \end{array} \right)$ and  let $(Z_t')_{t\in \bZ}$ be an
i.i.d. $N(0_d, U^* \Lambda U)$-distri-buted noise sequence and define
$Y_t' := \sum_{j=-\infty}^\infty M_j U Z_{t-j}'$. Then $(Y_t')_{t\in
\bZ}$ is a weakly and strictly stationary solution of $P(B) Y_t' =
Q(B) Z_t'$ by Theorem~\ref{thm-4}, and the entries of $M_j$ decrease
geometrically as $|j|\to \infty$. By Proposition~\ref{thm-2}, the
process $(\underline{Y}_t')_{t\in \bZ}$ defined by $\underline{Y}_t'
= ({Y_t'}^T, {Y_{t-1}'}^T, \ldots, {Y_{t-p+1}'}^T) =
\sum_{j=-\infty}^\infty \underline{M}_j U Z_{t-j}'$ is a strictly
stationary solution of
\begin{equation} \label{eq-gross-Q} \underline{Y}_t' -
\underline{\Phi} \, \underline{Y}_{t-1}' = \sum_{j=0}^q
\underline{\Theta}_j Z_{t-j}', \quad t \in \bZ.
\end{equation}
Denoting $\underline{\Theta}_j = 0_{mp,d}$ for $j\in \bZ \setminus
\{0,\ldots, q\}$, it follows that $\sum_{k=-\infty}^\infty
(\underline{M}_k - \underline{\Phi} \,\underline{M}_{k-1}) U
Z_{t-k}' = \sum_{k=-\infty}^\infty \underline{\Theta}_k Z_{t-k}'$,
and multiplying this equation from the right by ${Z'}_{t-j}^T$,
taking expectations and observing that $M(z)\Lambda=M(z)$ we conclude that
\begin{equation} \label{eq-L1}(\underline{M}_j - \underline{\Phi}\,
\underline{M}_{j-1}) U  = (\underline{M}_j - \underline{\Phi}\,
\underline{M}_{j-1}) \Lambda U= \underline{\Theta}_j U^* \Lambda U
\quad \forall\;j\in \bZ. \end{equation}

Next observe that since $(\underline{Y}_t')_{t\in \bZ}$ is a
strictly stationary solution of \eqref{eq-gross-Q}, it follows from
Theorem~\ref{thm-main} that $(\underline{Y}_t'')_{t\in \bZ}$,
defined by $\underline{Y}_t'' = \sum_{j=-\infty}^\infty
\underline{N}_j Z_{t-j}'$, is also a strictly stationary solution of
\eqref{eq-gross-Q}. With precisely the same argument as above it
follows that \begin{equation} \label{eq-L2} (\underline{N}_j -
\underline{\Phi} \,\underline{N}_{j-1}) U^* \Lambda U  =
\underline{\Theta}_j U^* \Lambda U \quad \forall\; j\in \bZ.
\end{equation} Now let $L_j := \underline{M}_j - \underline{N}_j U^*
\Lambda$, $j\in \bZ$. Then $L_j - \underline{\Phi} L_{j-1} =
0_{mp,d}$ from \eqref{eq-L1} and \eqref{eq-L2}, and the entries of
$L_j$ decrease exponentially as $|j|\to \infty$ since so do the
entries of $\underline{M}_j$ and $\underline{N}_j$. It follows that
for $h\in \{1,\ldots, H\}$ and $j\in \bZ$ we have
\begin{equation} \label{eq-Q-gross-2} \underline{I}_{h}
\underline{S}^{-1} L_j - \underline{\Phi}_h \underline{I}_{h}
\underline{S}^{-1} L_{j-1} = \underline{I}_{h} \left(
\underline{S}^{-1} L_j - \begin{pmatrix} \underline{\Phi}_1 &  &\\ & \ddots & \\
& & \underline{\Phi}_H \end{pmatrix} \underline{S}^{-1} L_{j-1}
\right)= 0_{\underline{r}_{h+1}-\underline{r}_h,d}.\end{equation}
Since $\underline{\Phi}_h$ is invertible for $h\in \{1,\ldots, H\}$
such that $\underline{\lambda}_h\neq 0$, this gives
$\underline{I}_{h} \underline{S}^{-1} L_0 =
\underline{\Phi}_{h}^{-j} \underline{I}_{h} \underline{S}^{-1} L_j$
for all $j\in \bZ$ and $\underline{\lambda}_h\neq 0$.
 Since for $|\underline{\lambda}_h|\geq 1$, $\|\underline{\Phi}_{h}^{-j}\| \leq \kappa j^{mp}$ for all $j\in
\bN_0$ for some constant $\kappa$, it follows that
$\|\underline{I}_{h} \underline{S}^{-1} L_0 \| \leq \kappa j^{mp}
\|\underline{I}_h \underline{S}^{-1} L_j\|$, which converges to 0 as
$j\to\infty$ by the geometric decrease of the coefficients of $L_j$
as $j\to\infty$, so that $\underline{I}_{h} \underline{S}^{-1} L_k =
0$ for $|\underline{\lambda}_h|\geq 1$ and $k=0$ and hence for all
$k\in \bZ$. Similarly, letting $j\to-\infty$, it follows that
$\underline{I}_h \underline{S}^{-1} L_k = 0$ for
$|\underline{\lambda}_h|\in (0,1)$ and $k=0$ and hence for all $k\in
\bZ$. Finally, for $h\in \{1,\ldots, H\}$ such that
$\underline{\lambda}_h=0$ observe that $\underline{I}_{h}
\underline{S}^{-1} L_k = \underline{\Phi}_{h}^{mp} \underline{I}_{h}
\underline{S}^{-1} L_{k-mp}$ for $k\in \bZ$ by \eqref{eq-Q-gross-2},
and since $\underline{\Phi}_{h}^{mp} = 0$, this shows that
$\underline{I}_{h} \underline{S}^{-1} L_k = 0$ for $k\in \bZ$.
Summing up, we have $\underline{S}^{-1} L_k =0$ and hence
$\underline{M}_k = \underline{N}_k U^* \Lambda$ for $k\in \bZ$,
which is \eqref{def-Mj}. Equation~\eqref{eq-Nt2} then follows from
\eqref{eq-uw}, since
$$\underline{M}_j U Z_{t-j} = \underline{M}_j \left(
\begin{array}{c}
w_{t-j} \\ u \end{array} \right) = \underline{N}_j U^* \left(
\begin{array}{c} w_{t-j} \\ 0_{d-s} \end{array} \right) =
\underline{N}_j U^* \left(U Z_{t-j} -  \left( \begin{array}{c} 0 \\
u
\end{array} \right) \right).$$
\end{proof}

Returning to the proof of the necessity of conditions (ii) and (ii')
for a strictly stationary solution to exist, observe that
 $\sum_{j=-\infty}^\infty
\underline{N}_j Z_{t-j}$ converges almost surely absolutely by
\eqref{eq-Yt-prime}, and since the entries of $\underline{N}_j$
decrease geometrically as $|j|\to\infty$, this together with
\eqref{eq-Nt2} implies that $\sum_{j=-\infty}^\infty \underline{M}_j
U Z_{t-j}$ converges almost surely absolutely, which shows that
(ii') must hold. To see (ii), observe that for $j\geq mp +q$ we have
\begin{equation*} \label{N3} N_{j,h} =
\begin{cases} \underline{\Phi}_{h}^{j-q} \sum_{k=0}^{q}
\underline{\Phi}_{h}^{q-k} \underline{I}_{h} \underline{S}^{-1}
\underline{\Theta}_k, & |\underline{\lambda}_h| \in (0,1),\\
0, & |\underline{\lambda}_h| \not\in (0,1),
\end{cases}\end{equation*} while
\begin{equation*} \label{eq-N4}
N_{-1,h} = \begin{cases} \underline{\Phi}_{h}^{-1-q} \sum_{k= 0}^{q}
\underline{\Phi}_{h}^{q-k} \underline{I}_{h} \underline{S}^{-1}
\underline{\Theta}_k , & |\underline{\lambda}_h| > 1,\\
0, & |\underline{\lambda}_h| \leq 1.\end{cases} \end{equation*}
Since a strictly stationary solution of \eqref{eq1q-gross} exists,
it follows from Theorem~\ref{thm-main} that $\bE \log^+
\|\underline{N}_{j} Z_0\| < \infty$ for $j\geq mp+q$ and $\bE \log^+
\| \underline{N}_{-1} Z_0\| < \infty$. Together with \eqref{eq-Nt2}
this shows that condition (ii) of Theorem~\ref{thm-5} is necessary.

\subsection{The sufficiency of the conditions and uniqueness of the
solution}

In this subsection we shall show that (i), (ii), (iii) as well as
(i), (ii'), (iii) of Theorem~\ref{thm-5} are sufficient conditions
for a strictly stationary solution of \eqref{eqpq} to exist, and
prove the uniqueness assertion.

(a) Assume that conditions (i), (ii) and (iii) hold for some $v\in
\bC^s$ and $g\in \bC^m$. Then $\bE \log^+ \| \underline{N}_{-1}
Z_0\| < \infty$ and $\bE \log^+ \| \underline{N}_{mp+q} Z_0\| <
\infty$ by (ii) and \eqref{eq-Nt2}. In particular, since
$\underline{S}$ is invertible, $\bE \log^+ \| N_{-1,h} Z_0\| <
\infty$ for $|\underline{\lambda}_h|> 1$ and $\bE \log^+ \|
N_{mp+q,h} Z_0\| < \infty$ for $|\underline{\lambda}_h| \in (0,1)$.
The invertibility of $\underline{\Phi}_h$ for $\underline{\lambda}_h
\neq 0$ then shows that
\begin{equation} \label{eq-cond-ii-new}
\bE \log^+ \left\| \sum_{k=0}^q \underline{\Phi}_h^{q-k}
\underline{I}_h \underline{S}^{-1} \underline{\Theta}_k Z_0 \right\|
< \infty \quad \forall\; h\in \{1,\ldots, H\}:
|\underline{\lambda}_h| \in (0,1) \cup (1,\infty).
\end{equation}
Now let $(W_t''')_{t\in \bZ}$ be an i.i.d. $N( \left(
\begin{array}{c} v \\ u \end{array} \right) , \left(
\begin{array}{ll} \mbox{\rm Id}_s & 0_{s,d-s} \\ 0_{d-s,s} &
0_{d-s,d-s} \end{array} \right))$ distributed sequence and define
$Z_t''' := U^* W_t'''$. Then $\bE Z_t''' = U^* (v^T, u^T)^T$. By
conditions (i) and (iii) and  Theorem~\ref{thm-4}, $(Y_t''')_{t\in
\bZ}$, defined by $Y_t''' := P(1)^{-1} Q(1) \bE Z_0''' +$
$\sum_{j=-\infty}^\infty M_j (W_{t-j}'''- (v^T,u^T)^T)$,  is a
weakly stationary solution of $Y_t''' - \sum_{k=1}^p \Psi_k
Y_{t-k}''' = \sum_{k=0}^q \Theta_k Z_{t-k}'''$, and obviously, it is
also strictly stationary. It now follows in complete analogy to the
necessity proof presented in Section~\ref{S-5-1} that $A_h=0$ and
$C_h u = (\alpha_{h,1}, \ldots, \alpha_{h,\underline{r}_{h+1} -
\underline{r}_h})^T$ for $|\underline{\lambda}_h|=1$, where
$(A_h,C_h)$ is defined as in \eqref{eq-AB} and $\alpha_{h,1} = 0$ if
$\lambda_h=1$. Hence $\sum_{k=0}^q \underline{\Phi}_h^{q-k}
\underline{I}_h \underline{S}^{-1} \underline{\widetilde{\Theta}}_k
W_0 = (\alpha_{h,1}, \ldots, \alpha_{h,\underline{r}_{h+1} -
\underline{r}_h})^T$ for $|\underline{\lambda}_h|=1$. By
Theorem~\ref{thm-main}, this together with \eqref{eq-cond-ii-new}
implies the existence of a strictly stationary solution of
\eqref{eq1q-gross},  so that a strictly stationary solution
$(Y_t)_{t\in \bZ}$ of \eqref{eqpq} exists by
Proposition~\ref{thm-2}.

(b) Now assume that conditions (i), (ii') and (iii) hold for some
$v\in \bC^s$ and $g\in \bC^m$ and define $Y=(Y_t)_{t\in \bZ}$ by
\eqref{eq-Y}. Then $Y$ is clearly strictly stationary. Since $U Z_t
= (w_t^T, u^T)$, we further have, using (iii), that
\begin{eqnarray*}
P(B) Y_t  & = &  P(1) g - P(1)M(1) \left( \begin{array}{c} v \\ u
\end{array} \right) + Q(B) U^* \left(
\begin{array}{ll} \mbox{\rm Id}_s & 0_{s,d-s} \\ 0_{d-s,s} & 0_{d-s,d-s}
\end{array} \right) \left( \begin{array}{c} w_t \\ u \end{array}
\right) \\
& = &  Q(1) U^* \left(
\begin{array}{c} v \\ u \end{array} \right)
 - Q(1) U^* \left(
\begin{array}{c} v \\ 0_{d-s} \end{array} \right)
+ Q(B) U^* \left(
\begin{array}{c} w_t \\ 0_{d-s} \end{array} \right)
\\
& = &  Q(B) U^* \left( \begin{array}{c} w_t \\ u \end{array} \right)
= Q(B) Z_t
\end{eqnarray*}
for $t\in \bZ$, so that $(Y_t)_{t\in \bZ}$ is a solution of
\eqref{eqpq}.

(c) Finally,  the uniqueness assertion follows from the fact that by
Proposition~\ref{thm-2}, \eqref{eqpq} has a unique strictly
stationary solution if and only if \eqref{eq1q-gross} has a unique
strictly stationary solution. By Theorem~\ref{thm-main}, the latter
is equivalent to the fact that $\underline{\Phi}$ does not have an
eigenvalue on the unit circle, which in turn is equivalent to $\det
P(z) \neq 0$ for $z$ on the unit circle, since  $\det P(z) = \det
(\mbox{\rm Id}_{mp} - \underline{\Phi} z)$ (e.g. Gohberg et
al.~\cite{GLR}, p.~14). This finishes the proof of
Theorem~\ref{thm-5}.

\section{Discussion and consequences of main results} \label{S7}
\setcounter{equation}{0}

In this section we shall discuss the main results and consider
special cases. Some consequences of the results are also listed. We
start with some comments  on Theorem~\ref{thm-main}. If $\Psi_1$ has
only eigenvalues of absolute value in $(0,1)\cup (1,\infty)$, then a
much simpler condition for stationarity of \eqref{eq1q} can be
given:

\begin{corollary} \label{cor-1}

Let the assumptions of Theorem~\ref{thm-main} be satisfied and
suppose that
 $\Psi_1$ has only eigenvalues of absolute value in $(0,1) \cup
(1,\infty)$. Then a strictly stationary solution of \eqref{eq1q}
exists if and only if
\begin{equation} \label{bed3}
\mathbb{E} \log^+ \left\| \left( \sum_{k=0}^q \Psi_1^{q-k}
\Theta_k\right)Z_0 \right\|  <  \infty.
 \end{equation}
\end{corollary}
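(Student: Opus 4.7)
The plan is to derive Corollary~\ref{cor-1} as a direct specialization of Theorem~\ref{thm-main}. Under the hypothesis that every eigenvalue of $\Psi_1$ satisfies $|\lambda_h| \in (0,1) \cup (1,\infty)$, no Jordan block has an eigenvalue on the unit circle or equal to zero, so conditions (ii) and (iii) of Theorem~\ref{thm-main} are vacuous and the existence of a strictly stationary solution reduces to condition (i) holding for every $h \in \{1,\ldots,H\}$. Thus the task is purely algebraic: to show that condition (i) for all $h$ is equivalent to the single condition \eqref{bed3}.

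The key observation is that $S^{-1}\Psi_1 S = \mathrm{diag}(\Phi_1,\ldots,\Phi_H)$, so for any $j \in \bN_0$,
\begin{equation*}
S^{-1} \Psi_1^{j} \Theta_k = \mathrm{diag}(\Phi_1^{j},\ldots,\Phi_H^{j})\, S^{-1}\Theta_k,
\end{equation*}
and applying $I_h$ from the left extracts precisely the $h$-th block row, giving $I_h S^{-1} \Psi_1^{j} \Theta_k = \Phi_h^{j} I_h S^{-1} \Theta_k$. Summing over $k$, I obtain
\begin{equation*}
I_h S^{-1} \left( \sum_{k=0}^q \Psi_1^{q-k} \Theta_k \right) = \sum_{k=0}^q \Phi_h^{q-k} I_h S^{-1} \Theta_k,
\end{equation*}
and stacking these identities vertically for $h=1,\ldots,H$ reconstructs $S^{-1}\sum_{k=0}^q \Psi_1^{q-k}\Theta_k$.

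From here I would combine two routine facts: first, the invertibility of $S$ means $\|v\|$ and $\|S^{-1}v\|$ are equivalent norms on $\bC^m$, so $\bE \log^+ \|(\sum_k \Psi_1^{q-k}\Theta_k)Z_0\|$ is finite iff $\bE \log^+ \|S^{-1}(\sum_k \Psi_1^{q-k}\Theta_k)Z_0\|$ is finite; second, using the equivalence of norms again (or simply comparing to the $\ell^\infty$ block norm), this latter quantity is finite iff $\bE \log^+ \max_{h} \|(\sum_k \Phi_h^{q-k} I_h S^{-1}\Theta_k)Z_0\|$ is finite. Finally, since $\log^+ \max_h X_h \leq \sum_h \log^+ X_h \leq H\,\log^+\max_h X_h$ for nonnegative $X_h$, the max-condition is equivalent to the individual finiteness of $\bE\log^+\|(\sum_k \Phi_h^{q-k} I_h S^{-1}\Theta_k)Z_0\|$ for each $h$, which is exactly \eqref{bed1a}.

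Since nothing here is delicate, I expect no real obstacle; the only minor care is in the equivalence-of-norms step, where one could equivalently argue directly by picking the max-over-blocks norm on $\bC^m$ after transport by $S^{-1}$. Putting these pieces together yields the equivalence of \eqref{bed3} with condition (i) of Theorem~\ref{thm-main} for all $h$, which under the stated eigenvalue hypothesis is the full criterion for the existence of a strictly stationary solution.
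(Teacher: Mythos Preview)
Your proposal is correct and follows essentially the same approach as the paper's proof: both reduce to Theorem~\ref{thm-main}, note that only condition~(i) is active, and use the block-diagonal identity $I_h S^{-1}\Psi_1^{q-k}\Theta_k = \Phi_h^{q-k} I_h S^{-1}\Theta_k$ together with the invertibility of $S$ to pass between the single condition~\eqref{bed3} and the blockwise conditions~\eqref{bed1a}. Your write-up is simply more explicit about the norm-equivalence and $\log^+$ inequalities that justify the passage between the stacked and per-block log-moment conditions.
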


\begin{proof}
It follows from Theorem~\ref{thm-main} that there exists a strictly
stationary solution if and only if \eqref{bed1a} holds for every
$h\in \{1,\ldots, H\}$. But this is equivalent to $$\mathbb{E}
\log^+ \| ( \sum_{k=0}^q(S^{-1} \Psi_1 S)^{q-k} \mbox{Id}_m S^{-1}
\Theta_k)Z_0 \| < \infty,$$ which in turn is equivalent to
\eqref{bed3}, since $S$ is invertible and hence for a random vector
$R \in \bC^m$ we have $\mathbb{E} \log^+ \|S R\|<\infty$ if and only
if $\mathbb{E} \log^+ \| R\| < \infty$.
\end{proof}

\begin{remark} \label{rem-2}
Suppose that $\Psi_1$ has only eigenvalues of absolute value in
$(0,1) \cup (1,\infty)$. Then $\mathbb{E} \log^+ \|Z_0\|$ is a
sufficient condition for \eqref{eq1q} to have a strictly stationary
solution, since it implies \eqref{bed3}. But it is not necessary.
For example, let $q=1$, $m=d=2$ and
$$\Psi_1 = \begin{pmatrix} 2 & 0
\\ 0 & 3 \end{pmatrix}, \quad \Theta_0 = \mbox{\rm Id}_2, \quad \Theta_1
= \begin{pmatrix} -1 & -1 \\ 1 & -4 \end{pmatrix}, \quad \mbox{so
that} \quad \sum_{k=0}^1 \Psi_1^{q-k} \Theta_k =
\begin{pmatrix} 1 & -1
\\ 1 & -1
\end{pmatrix}.$$
By \eqref{bed3},  a strictly stationary solution exists for example
if the i.i.d. noise $(Z_t)_{t\in \bZ}$ satisfies $Z_0 = (R_0, R_0 +
R_0')^T$, where $R_0'$ is a random variable with finite log moment
and $R_0$ a random variable with infinite log moment. In particular,
 $\mathbb{E} \log^+ \|Z_0\| = \infty$ is possible.
\end{remark}

An example like in the remark above cannot occur if the matrix
$\sum_{k=0}^q \Psi_1^{q-k} \Theta_k$ is invertible if $m=d$. More
generally, we have the following result:

\begin{corollary} \label{cor-2}
Let the assumptions of Theorem~\ref{thm-main} be satisfied and
suppose that
 $\Psi_1$ has only eigenvalues of absolute value in $(0,1) \cup
(1,\infty)$. Suppose further that $d\leq m$ and that $\sum_{k=0}^q
\Psi_1^{q-k} \Theta_k$ has full rank $d$. Then a strictly stationary
solution of \eqref{eq1q} exists if and only if $\mathbb{E} \log^+
\|Z_0\| < \infty$.
\end{corollary}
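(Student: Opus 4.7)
My plan is to deduce this from Corollary~\ref{cor-1}, whose condition \eqref{bed3} already characterizes existence under the assumption on the spectrum of $\Psi_1$. Let $A := \sum_{k=0}^q \Psi_1^{q-k}\Theta_k \in \bC^{m\times d}$ and note that the corollary reduces the claim to the equivalence
\[
\mathbb{E}\log^+ \|A Z_0\| < \infty \;\Longleftrightarrow\; \mathbb{E}\log^+ \|Z_0\| < \infty,
\]
so the entire task is to show that the two log-moment conditions agree when $A$ has full column rank $d$ (with $d\le m$).

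For the easy direction ($\Leftarrow$), the submultiplicativity of $\|\cdot\|$ gives $\|A Z_0\|\leq \|A\|\,\|Z_0\|$, hence $\log^+\|A Z_0\|\leq \log^+\|A\| + \log^+\|Z_0\|$, and sufficiency follows immediately from $\mathbb{E}\log^+\|Z_0\|<\infty$ together with Corollary~\ref{cor-1}.

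For the converse, I would build an explicit left inverse of $A$. Since $A$ has rank $d$, the $d\times d$ matrix $A^*A$ is positive definite and therefore invertible, and $B := (A^*A)^{-1} A^*\in \bC^{d\times m}$ satisfies $B A = \mbox{\rm Id}_d$. Hence $Z_0 = B(A Z_0)$ almost surely, which yields $\|Z_0\| \leq \|B\|\,\|A Z_0\|$ and consequently $\log^+\|Z_0\| \leq \log^+\|B\| + \log^+\|A Z_0\|$. Taking expectations and invoking the necessity direction of Corollary~\ref{cor-1} (applied to the assumed strictly stationary solution, giving $\mathbb{E}\log^+\|A Z_0\|<\infty$) delivers $\mathbb{E}\log^+\|Z_0\|<\infty$.

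There is no real obstacle here; the only point requiring a moment's thought is that $A$ need not be a square matrix, so one cannot invert $A$ directly — but the full column rank assumption is precisely what makes $A^*A$ invertible and produces the left inverse used above. Everything else reduces to the submultiplicativity bound and a direct appeal to Corollary~\ref{cor-1}.
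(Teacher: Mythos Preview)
Your argument is correct and follows essentially the same route as the paper's own proof: reduce to Corollary~\ref{cor-1}, then use a left inverse of $A$ built from the full-rank assumption to transfer the log-moment condition back to $Z_0$. Your use of $A^*A$ rather than the paper's $A^TA$ is in fact the cleaner choice over $\bC$, since $A^*A$ is guaranteed positive definite (hence invertible) for a full-rank complex matrix $A$, whereas $A^TA$ need not be.
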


\begin{proof}
The sufficiency of the condition has been observed in
Remark~\ref{rem-2}, and for the necessity, observe that with $A:=
\sum_{k=0}^q \Psi_1^{q-k} \Theta_k$ and $U:= A Z_0$ we must have
$\mathbb{E} \log^+ \|U\| < \infty$ by \eqref{bed3}. Since $A$ has
rank $d$, the matrix $A^T A \in \bC^{d\times d}$ is invertible and
we have $Z_0 = (A^T A)^{-1} A^T U$, i.e. the components of $Z_0$ are
linear combinations of those of $U$. It follows that $\mathbb{E}
\log^+ \|Z_0\| < \infty$.
\end{proof}

Next, we shall discuss the conditions of Theorem~\ref{thm-5} in more
detail. The following remark is obvious from Theorem~\ref{thm-5}. It
implies in particular the well known fact that $\bE \log^+
\|Z_0\|<\infty$ together with $\det P(z)\neq 0$ for all $z$ on the
unit circle is sufficient for the existence of a strictly stationary
solution.

\begin{remark} \label{rem-2a} (a) $\bE \log^+ \|Z_0\|<\infty$ is a
sufficient condition for (ii) of Theorem~\ref{thm-5}.\\
(b) $\det P(1) \neq 0$ is a sufficient condition for (iii) of
Theorem~\ref{thm-5}.\\
(c) $\det P(z) \neq 0$ for all $z$ on the unit circle is a
sufficient condition for (i) and (iii) of Theorem~\ref{thm-5}.
\end{remark}

With the notations of Theorem~\ref{thm-5}, denote
\begin{equation} \label{def-tilde-Q}
\widetilde{Q}(z) := Q(z) U^* \left(
\begin{array}{ll} \mbox{\rm Id}_s & 0_{s,d-s}
\\ 0_{d-s,s} & 0_{d-s,d-s} \end{array} \right) ,
\end{equation}
so that $M(z) = P^{-1}(z) \widetilde{Q}(z)$. It is natural to ask if
conditions (i) and (iii) of Theorem~\ref{thm-5} can be replaced by a
removability condition on the singularities on the unit circle of
$(\det P(z))^{-1} \det (\widetilde{Q}(z))$ if $d=m$. The following
corollary shows that this condition is indeed necessary, but it is
not sufficient as pointed out in Remark~\ref{rem-2c}.

\begin{corollary} \label{cor-3}
Under the assumptions of Theorem~\ref{thm-main}, with
$\widetilde{Q}(z)$ as defined in \eqref{def-tilde-Q},  a necessary
condition for a strictly stationary solution of the ARMA$(p,q)$
equation \eqref{eqpq} to exist is that the function $z \mapsto |\det
{P}(z)|^{-2} {\det (\widetilde{Q}(z)\widetilde{Q}(z)^*)}$ has only
removable singularities on the unit circle. If additionally $d=m$,
then a necessary condition for a strictly stationary solution to
exist is that the matrix rational  function $z \mapsto (\det
P(z))^{-1} \det (\widetilde{Q}(z))$ has only removable singularities
on the unit circle.
\end{corollary}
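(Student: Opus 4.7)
My plan is to deduce both assertions directly from condition (i) of Theorem~\ref{thm-5}. That theorem tells us that the existence of a strictly stationary solution of \eqref{eqpq} forces the $\bC^{m\times d}$-valued rational function $M(z) = P^{-1}(z)\widetilde{Q}(z)$ to have only removable singularities on the unit circle, which is to say that $M$ admits a holomorphic extension to some open neighbourhood of the unit circle. Both statements in the corollary will then follow by observing that the determinantal expressions in question are built out of $M$.

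For the first (general) claim, I would start from the matrix identity $M(z)M(z)^* = P^{-1}(z)\widetilde{Q}(z)\widetilde{Q}(z)^*(P^{-1}(z))^*$ and take determinants, using $\det((P^{-1}(z))^*) = \overline{(\det P(z))^{-1}}$, to obtain
\begin{equation*}
\det\bigl(M(z)M(z)^*\bigr) \;=\; |\det P(z)|^{-2}\,\det\bigl(\widetilde{Q}(z)\widetilde{Q}(z)^*\bigr)
\end{equation*}
for all $z$ on the unit circle with $\det P(z)\neq 0$. Because $M$ is continuous (in fact holomorphic) on a neighbourhood of the unit circle, the left-hand side is continuous on the whole unit circle. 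Hence the right-hand side, a priori undefined at the finitely many zeros of $\det P$ on the unit circle, extends continuously across them, i.e.\ has only removable singularities there in this natural sense.

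For the second claim, when $d=m$ the matrix $\widetilde{Q}(z)$ is square, hence so is $M(z)$, and the scalar rational function $\det M(z) = (\det P(z))^{-1}\det\widetilde{Q}(z)$ is now an honest rational function of $z$ alone. Since $\det M(z)$ is a polynomial in the $m^2$ entries of $M(z)$, all of which have only removable singularities on the unit circle by condition (i) of Theorem~\ref{thm-5}, the same holds for $\det M(z) = (\det P(z))^{-1}\det\widetilde{Q}(z)$.

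The only genuine subtlety I anticipate is semantic rather than mathematical: in the first assertion the expression $|\det P(z)|^{-2}\det(\widetilde{Q}(z)\widetilde{Q}(z)^*)$ is not a rational function of $z$ alone but depends on both $z$ and $\bar{z}$, so "removable singularity" must be interpreted as continuous (in fact real-analytic) extendability along the unit circle. With that convention in hand, each step of the argument reduces to routine continuity and multiplicativity of the determinant, and no further work is needed.
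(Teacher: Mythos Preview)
Your proposal is correct and follows essentially the same route as the paper's proof: both invoke condition~(i) of Theorem~\ref{thm-5} to conclude that $M(z)=P^{-1}(z)\widetilde{Q}(z)$ extends holomorphically across the unit circle, and then pass to $\det(M(z)M(z)^*)$ for the first assertion and to $\det M(z)$ for the second. Your explicit remark on the meaning of ``removable singularity'' for the non-holomorphic expression $|\det P(z)|^{-2}\det(\widetilde{Q}(z)\widetilde{Q}(z)^*)$ is a useful clarification that the paper leaves implicit.
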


\begin{proof} The second assertion  is immediate from
Theorem~\ref{thm-5}, and the first assertion follows from the fact
that if $M(z)$ as defined in Theorem~\ref{thm-5} has only removable
singularities on the unit circle, then so does $M(z) M(z)^*$ and
hence $\det (M(z) M(z)^*)$.
\end{proof}

\begin{remark} \label{rem-2c}
In the case $d=m$ and $\bE \log^+\|Z_0\| < \infty$, the condition
that the matrix rational function $z \mapsto (\det {P}(z))^{-1}
{\det \widetilde{Q}(z)}$ has only removable singularities on the
unit circle is not sufficient for the existence of a strictly
stationary solution of \eqref{eq1q}. For example, let $p=q=1$,
$m=d=2$ and $\Psi_1 = \Theta_0 = \mbox{\rm Id}_{2}$, $\Theta_1 =
\begin{pmatrix} -1 &  0\\ 1 & -1 \end{pmatrix}$, $(Z_t)_{t\in \bZ}$ be
i.i.d. standard normally distributed and $U=\mbox{\rm Id}_2$. Then
$\det P(z) = \det \widetilde{Q}(z) = (1-z)^2$, but it does not hold
that $\Psi_1 \Theta_0 + \Theta_1 = 0$, so that condition (iii) of
Theorem~\ref{thm-main} is violated and no strictly stationary
solution can exist.
\end{remark}

Next, we shall discuss condition (i) of Theorem~\ref{thm-5} in more
detail.  Recall (e.g. Kailath \cite{Kailath}) that a  $\bC^{m\times
m}$ matrix polynomial $R(z)$ is a {\it left-divisor} of $P(z)$, if
there is a matrix polynomial $P_1(z)$ such that $P(z) = R(z)
P_1(z)$. The matrix polynomials $P(z)$ and $\widetilde{Q}(z)$ are
{\it left-coprime}, if every common left-divisor $R(z)$ of $P(z)$
and $\widetilde{Q}(z)$ is {\it unimodular}, i.e. the determinant of
$R(z)$ is constant in $z$. In that case, the matrix rational
function $P^{-1}(z) \widetilde{Q}(z)$ is also called {\it
irreducible}. With $\widetilde{Q}$ as defined in
\eqref{def-tilde-Q}, it is then easy to see that condition (i) of
Theorem~\ref{thm-5} is equivalent to {\it
\begin{enumerate}
\item[(i')] There exist $\bC^{m\times m}$-valued matrix polynomials
$P_1(z)$ and $R(z)$ and a $\bC^{m\times d}$-valued matrix polynomial
$Q_1(z)$ such that $P(z) = R(z) P_1(z)$, $\widetilde{Q}(z) = R(z)
Q_1(z)$ for all $z\in \bC$ and $\det P_1(z) \neq 0$ for all $z$ on
the unit circle.
\end{enumerate} }
That (i') implies (i) is obvious, and that (i) implies (i')
follows by taking $R(z)$ as the greatest common left-divisor
(cf.~\cite{Kailath}, p.~377) of $P(z)$ and $\widetilde{Q}(z)$. The
thus remaining right-factors $P_1(z)$ and $Q_1(z)$ are then left-coprime,
and since the matrix rational function $M(z) = P^{-1}(z)
\widetilde{Q}(z) = P_1^{-1}(z) Q_1(z)$ has no poles on the unit
circle, it follows from page 447 in Kailath~\cite{Kailath} that
$\det P_1(z) \neq 0$ for all $z$ on the unit circle, which
establishes (i'). As an immediated consequence, we have:

\begin{remark} \label{rem-BP-1}
With  the notation of the Theorem~\ref{thm-5} and
\eqref{def-tilde-Q}, assume additionally that $P(z)$ and
$\widetilde{Q}(z)$ are left-coprime. Then condition (i) of
Theorem~\ref{thm-5} is equivalent to $\det P(z) \neq 0$ for all $z$
on the unit circle.
\end{remark}

Next we show how a slight extension of Theorem 4.1 of Bougerol and
Picard~\cite{BP}, which characterized the existence of a strictly stationary
non-anticipative solution of the ARMA$(p,q)$ equation~\eqref{eqpq},
can be deduced from Theorem~\ref{thm-5}. By a {\it
non-anticipative} strictly stationary solution we mean a strictly
stationary solution $Y=(Y_t)_{t\in \bZ}$ such that for every $t\in
\bZ$, $Y_t$ is independent of the sigma algebra generated by
$(Z_s)_{s> t}$, and by a {\it causal} strictly stationary solution
we mean a strictly stationary solution $Y=(Y_t)_{t\in \bZ}$ such
that for every $t\in \bZ$, $Y_t$ is measurable with respect to the
sigma algebra generated by $(Z_s)_{s\leq t}$. Clearly, since
$(Z_t)_{t\in \bZ}$ is assumed to be i.i.d., every causal solution is
also non-anticipative. The equivalence of (i) and (iii) in the
theorem below was already obtained by Bougerol and Picarcd~\cite{BP}
under the additional assumption that $\bE \log^+ \|Z_0\| < \infty$.

\begin{theorem} \label{cor-BP}
In addition to the assumptions and notations of Theorem~\ref{thm-5},
assume that the matrix polynomials $P(z)$ and $\widetilde{Q}(z)$ are
left-coprime, with $\widetilde{Q}(z)$ as defined in
\eqref{def-tilde-Q}. Then the following are equivalent:
\begin{enumerate}
\addtolength{\itemsep}{-1.5ex}
\item[(i)] There exists a non-anticipative strictly stationary
solution of \eqref{eqpq}.
\item[(ii)] There exists a causal strictly stationary
solution of \eqref{eqpq}.
\item[(iii)] $\det P(z) \neq 0$ for all $z\in \bC$ such that
$|z|\leq 1$ and if $M(z) = \sum_{j=0}^\infty M_j z^j$ denotes the
Taylor expansion of $M(z) = P^{-1}(z) \widetilde{Q}(z)$, then
\begin{equation} \label{eq-logfinite2}
\bE \log^+ \| M_j UZ_0\| < \infty \quad \forall\; j \in \{ mp+q-p+1,
\ldots, mp+q\} .
\end{equation}
\end{enumerate}
\end{theorem}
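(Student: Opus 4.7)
The implication $(ii)\Rightarrow(i)$ is immediate since $(Z_t)_{t\in\bZ}$ is i.i.d. For $(iii)\Rightarrow(ii)$, the hypothesis $\det P(z)\neq 0$ on $\{|z|\leq 1\}$ forces $M(z) = P^{-1}(z)\widetilde{Q}(z)$ to be analytic in a neighbourhood of the closed unit disk, so its Laurent expansion about the unit circle is the Taylor series $M(z) = \sum_{j\geq 0} M_j z^j$ with $\|M_j\|$ decreasing geometrically; in particular $M_j = 0$ for $j<0$. Conditions (i) and (iii) of Theorem~\ref{thm-5} then hold trivially (cf.\ Remark~\ref{rem-2a}), while condition (ii) of Theorem~\ref{thm-5} reduces to \eqref{eq-logfinite2}. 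Hence Theorem~\ref{thm-5} supplies a strictly stationary solution which by \eqref{eq-Y} takes the form $Y_t = g + \sum_{j\geq 0} M_j(UZ_{t-j} - (v^T,u^T)^T)$ and is manifestly causal.

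For the main implication $(i)\Rightarrow (iii)$, assume a non-anticipative strictly stationary solution $Y=(Y_t)_{t\in\bZ}$ of \eqref{eqpq} exists. Theorem~\ref{thm-5} gives that all singularities of $M(z)$ on the unit circle are removable, and left-coprimeness together with Remark~\ref{rem-BP-1} yields $\det P(z)\neq 0$ for $|z|=1$. The uniqueness assertion of Theorem~\ref{thm-5} then forces $Y$ to coincide with the solution given by \eqref{eq-Y} for some $v\in\bC^s$ and $g\in\bC^m$. Split $Y_t = X_t^- + X_t^+$ where $X_t^- := g + \sum_{j\geq 0} M_j(UZ_{t-j}-(v^T,u^T)^T)$ and $X_t^+ := \sum_{j<0} M_j(UZ_{t-j} - (v^T,u^T)^T)$; both series converge a.s.\ absolutely by the geometric decay of $\|M_j\|$ as $|j|\to\infty$ which results from the removability on the unit circle. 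By construction $X_t^-$ is $\sigma(Z_s:s\leq t)$-measurable and $X_t^+$ is $\sigma(Z_s:s>t)$-measurable, so past/future independence of the noise yields $X_t^-\perp X_t^+$, and non-anticipativity of $Y$ yields $Y_t \perp X_t^+$.

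The crux is to deduce from these three facts that $X_t^+$ is a.s.\ constant. Since $X_t^-\perp X_t^+$ and $Y_t = X_t^- + X_t^+ \perp X_t^+$, the conditional distribution of $Y_t$ given $X_t^+ = x$ is simultaneously the unconditional distribution of $Y_t$ and, by independence of $X_t^-$ and $X_t^+$, the distribution of $X_t^- + x$; hence the distribution of $X_t^- + x$ is the same for a.e.\ $x$ with respect to the distribution of $X_t^+$. Since two translates of a probability measure on $\bC^m$ are equal only when the translation vectors agree, this forces $X_t^+$ to be a.s.\ constant. The i.i.d.\ summands $M_j(UZ_{t-j}-(v^T,u^T)^T) = M_j^{[1]}(w_{t-j}-v)$, $j<0$, where $M_j^{[1]}$ denotes the first $s$ columns of $M_j$, then individually equal constants a.s.; combined with the non-degeneracy of $b^* w_0$ for $b\in\bC^s\setminus\{0\}$ asserted by Theorem~\ref{thm-5} and the vanishing of the last $d-s$ columns of $M_j$ that is built into the definition of $M(z)$, this forces $M_j=0$ for all $j<0$. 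Consequently the Laurent expansion of the rational function $M$ has no principal part, so $M$ is analytic throughout its disk of Taylor convergence, which strictly contains $\{|z|\leq 1\}$; left-coprimeness then yields $\det P(z)\neq 0$ on $\{|z|\leq 1\}$. Finally, \eqref{eq-logfinite2} is inherited from condition (ii) of Theorem~\ref{thm-5}, the negative-index requirements being now trivial. The main obstacle is the step forcing $X_t^+$ to be a.s.\ constant, which must be executed without any moment assumption on $Z_0$.
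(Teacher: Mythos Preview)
Your proof is correct and follows essentially the same route as the paper: both directions $(iii)\Rightarrow(ii)$ and $(i)\Rightarrow(iii)$ proceed via Theorem~\ref{thm-5}, Remark~\ref{rem-BP-1}, uniqueness of the stationary solution once $\det P(z)\neq 0$ on the circle, and the observation that non-anticipativity forces the negative-index coefficients $M_j$ to vanish. The only difference is that you spell out carefully, via the conditional-distribution and translation argument, why $X_t^+$ must be a.s.\ constant, whereas the paper simply asserts that each $M_j U Z_{t-j}$ for $j<0$ is degenerate; your justification is a welcome clarification of a step the paper leaves implicit.
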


\begin{proof}
 The implication ``(iii)
$\Rightarrow$ (ii)'' is immediate from Theorem~\ref{thm-5} and
equation~\eqref{eq-Y}, and ``(ii) $\Rightarrow$ (i)'' is obvious
since $(Z_t)_{t\in \bZ}$ is i.i.d. Let us show that ``(i)
$\Rightarrow$ (iii)'': since a strictly stationary solution exists,
the function $M(z)$ has only removable singularities on the unit
circle by Theorem~\ref{thm-5}. Since $P(z)$ and $\widetilde{Q}(z)$
are left-coprime, this implies by Remark~\ref{rem-BP-1} that $\det
P(z) \neq 0$ for all $z\in \bC$ such that $|z|=1$. In particular, by
Theorem~\ref{thm-5}, the strictly stationary solution is unique and
given by \eqref{eq-Y}. By assumption, this solution must then be
non-anticipative, so that we conclude that the distribution of $M_j
U Z_{t-j}$ must be degenerate to a constant for all $j\in
\{-1,-2,\ldots\}$. But since $U Z_0 = (w_0^T, u^T)^T$ and $M_j =
(M_j', 0_{m,d-s})$ with certain matrices $M_j' \in \bC^{m,s}$, it
follows for $j\leq -1$ that $M_j UZ_0 = M_j' w_0$, so that $M_j' =
0$ since no non-trivial linear combination of the components of
$w_0$ is constant a.s. It follows that $M_j = 0$ for $j\leq -1$,
i.e. $M(z)$ has only removable singularities for $|z|\leq 1$. Since
$P(z)$ and $\widetilde{Q}(z)$ are assumed to be left-coprime, it
follows from page 447 in Kailath~\cite{Kailath} that $\det P(z) \neq
0$ for all $|z|\leq 1$. Equation \eqref{eq-logfinite2} is an
immediate consequence of Theorem~\ref{thm-5}.
\end{proof}

It may be possible to extend Theorem~\ref{cor-BP} to situations
without assuming that $P(z)$ and $\widetilde{Q}(z)$ are left-coprime,
but we did not investigate this question.

The last result is on the interplay of the existence of strictly and
of weakly stationary solutions of \eqref{eqpq} when the noise is
i.i.d. with finite second moments:

\begin{theorem} \label{cor-strict-weak}
Let $m,d, p\in \bN$, $q\in \bN_0$, and let $(Z_t)_{t\in \bZ}$ be an
i.i.d. sequence of $\bC^d$-valued random vectors with finite second
moment. Let $\Psi_1, \ldots, \Psi_p \in \bC^{m\times m}$ and
$\Theta_0, \ldots, \Theta_q \in \bC^{m\times d}$.  Then the
ARMA$(p,q)$ equation \eqref{eqpq} admits a strictly stationary
solution if and only if it admits a weakly stationary solution, and
in that case, the solution given by \eqref{eq-weakly2} is both a
strictly stationary and weakly stationary solution of \eqref{eqpq}.
\end{theorem}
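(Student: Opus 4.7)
The plan is to read the two sets of necessary-and-sufficient conditions off Theorems~\ref{thm-5} and~\ref{thm-4} and show that, under the finite second moment assumption, they are equivalent. The whole argument is organized around one bridge: the decomposition $U Z_0 = (w_0^T, u^T)^T$ of \eqref{eq-uw} gives $\bE Z_0 = U^*(\bE w_0^T, u^T)^T$, so the choice $v := \bE w_0 \in \bC^s$ turns the right-hand side $Q(1) U^* (v^T,u^T)^T$ in \eqref{eq-g} into $Q(1)\bE Z_0$, which is precisely the right-hand side of \eqref{eq-g2}. Condition (i) of the two theorems is literally the same (removability of singularities of $M(z)$ on the unit circle), so only conditions (ii) and (iii) of Theorem~\ref{thm-5} need attention.

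For the direction weakly stationary $\Rightarrow$ strictly stationary, I would assume the existence of a weakly stationary solution and apply Theorem~\ref{thm-4}: this yields (i) and some $g\in \bC^m$ with $P(1)g = Q(1)\bE Z_0$. Taking $v := \bE w_0$ then verifies condition (iii) of Theorem~\ref{thm-5} with the same $g$. For (ii), the finite second moment of $Z_0$ implies $\bE \|Z_0\| < \infty$ and hence $\bE \log^+ \|Z_0\| < \infty$ (via $\log^+ x \leq x$); consequently
\[
\bE \log^+ \|M_j U Z_0\| \leq \log^+ \|M_j U\| + \bE \log^+ \|Z_0\| < \infty
\]
for every $j\in \bZ$, covering the finite index set required in (ii). Thus all three conditions of Theorem~\ref{thm-5} hold and a strictly stationary solution exists. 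Conversely, if a strictly stationary solution exists, Theorem~\ref{thm-5} gives (i) together with the equivalent condition (iii'): for every $v\in \bC^s$ there is $g(v)$ solving \eqref{eq-g}. Specializing to $v=\bE w_0$ produces a $g$ with $P(1)g = Q(1) \bE Z_0$, and combined with (i) this is exactly what Theorem~\ref{thm-4} demands, so a weakly stationary solution exists.

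It remains to identify the process defined by \eqref{eq-weakly2} as simultaneously strict and weakly stationary. With $v = \bE w_0$ chosen as above,
\[
U Z_{t-j} - (v^T, u^T)^T = (w_{t-j}^T - \bE w_0^T,\, 0_{d-s}^T)^T = U(Z_{t-j} - \bE Z_0),
\]
so formula \eqref{eq-Y} of Theorem~\ref{thm-5} coincides term-by-term with formula \eqref{eq-weakly2} of Theorem~\ref{thm-4} once the same $g$ is used. Theorem~\ref{thm-5} then certifies that this process is a strictly stationary solution (the series converges a.s.\ absolutely, since the $M_j$ decay geometrically and $\bE \|Z_0\| < \infty$), while Theorem~\ref{thm-4} certifies that it is a weakly stationary solution. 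There is no serious obstacle; the only non-mechanical point is noticing that one must invoke the equivalent form (iii') of Theorem~\ref{thm-5} rather than (iii) in order to be free to specialize $v$ to $\bE w_0$.
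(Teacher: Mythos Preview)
Your proof is correct, but it takes a somewhat different route from the paper's. The paper argues directly from the explicit solution formulas: if a weakly stationary solution exists, Theorem~\ref{thm-4} provides the explicit process \eqref{eq-weakly2}, and this is manifestly strictly stationary since $(Z_t)_{t\in\bZ}$ is i.i.d.\ and the formula is a fixed shift-equivariant function of the sequence; conversely, if a strictly stationary solution exists, Theorem~\ref{thm-5} provides \eqref{eq-Y}, which has finite second moment (the $M_j$ decay geometrically and $\bE\|Z_0\|^2<\infty$) and is therefore weakly stationary. No matching of conditions is needed.

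Your approach instead verifies the hypotheses of each existence theorem from the conclusions of the other, using the bridge $v=\bE w_0$ to convert \eqref{eq-g} into \eqref{eq-g2} and invoking (iii') so that $v$ may be specialised freely. This is more bookkeeping but makes the equivalence of the two condition sets explicit. One small point you gloss over: the unitary $U$ and the integer $s$ are defined differently in the two theorems (via the degeneracy space $K$ in Theorem~\ref{thm-5}, via the covariance $\Sigma$ in Theorem~\ref{thm-4}); under the finite second moment assumption one has $K=\ker\Sigma$, so $s$ agrees and the two $M(z)$ differ only by a unitary right factor, hence have the same removable singularities. This deserves a sentence before asserting that ``condition (i) of the two theorems is literally the same.''
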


\begin{proof} It follows from Theorem~\ref{thm-4} that if a weakly
stationary solution exists, then one choice of such a solution is
given by \eqref{eq-weakly2}, which is clearly also strictly
stationary. On the other hand, if a strictly stationary solution
exists, then by Theorem~\ref{thm-5}, one such solution is given by
\eqref{eq-Y}, which is clearly weakly stationary.
\end{proof}

Finally, we remark that most of the results presented in this paper
can be applied also to the case when $(Z_t)_{t\in \bZ}$ is an i.i.d.
sequence of  $\bC^{d\times d'}$ random matrices and $(Y_t)_{t\in
\bZ}$ is $\bC^{m\times d'}$-valued. This can be seen by stacking the
columns of $Z_t$ into a $\bC^{dd'}$-variate random vector $Z_t'$,
those of $Y_t$ into a $\bC^{md'}$-variate random vector $Y_t'$, and
considering the matrices
$$\Psi_k' := \begin{pmatrix} \Psi_k & & \\ & \ddots & \\ & & \Psi_k
\end{pmatrix} \in \bC^{md'\times md'}
\quad \mbox{and} \quad \Theta_k' := \begin{pmatrix} \Theta_k & & \\
& \ddots & \\ & & \Theta_k
\end{pmatrix} \in \bC^{md'\times dd'}.$$
The question of existence of a strictly stationary solution of
\eqref{eqpq} with matrix-valued $Z_t$ and $Y_t$ is then equivalent
to the existence of a strictly stationary solution of $Y_t' -
\sum_{k=1}^p \Psi_k' Y_{t-k}' = \sum_{k=0}^q \Theta_k' Z_{t-k}'$.

\subsubsection*{Acknowledgements}
We would like to thank Jens-Peter Krei{\ss} for helpful comments.
Support from an NTH-grant of the state of Lower Saxony and from
National Science Foundation Grant DMS-1107031 is gratefully
acknowledged.

\end{document}